\documentclass[journal,twoside,web]{IEEEcolor}
\usepackage{generic}
\usepackage{cite}
\usepackage{amsmath,amssymb,amsfonts}
\usepackage{algorithmic}
\usepackage{graphicx}
\usepackage{textcomp}
\usepackage{bm}
\usepackage{caption,subcaption}
\usepackage{comment}
\usepackage{xcolor}
\usepackage{epstopdf}

\setlength{\arraycolsep}{2.5pt}

\usepackage{enumitem}
\newtheorem{theorem}{Theorem}   
\newtheorem{lemma}{Lemma}

\newtheorem{corollary}{Corollary}
\newtheorem{example}{Example}
\newtheorem{remark}{Remark}

\newtheorem{definition}{Definition}

\def\BibTeX{{\rm B\kern-.05em{\sc i\kern-.025em b}\kern-.08em
    T\kern-.1667em\lower.7ex\hbox{E}\kern-.125emX}}
\markboth{Submitted to IEEE Transactions on Automatic Control, VOL. XX, NO. XX, 2024}
{Submitted to IEEE Transactions on Automatic Control, VOL. XX, NO. XX, 2024}

\begin{document}
\title{Necessary and Sufficient Conditions for Data-driven Model Reference Control}
\author{Jiwei Wang, Simone Baldi,~\IEEEmembership{Senior Member,~IEEE} and Henk J. van Waarde,~\IEEEmembership{Member,~IEEE} \vspace{-2em}
\thanks{This work was partially supported by the Jiangsu Research Center of Applied Mathematics grant BK20233002, by the China Scholarship Council 202206090231, and by the Natural Science Foundation of China grant 62073074, 62233004. (Corresponding author: Simone Baldi)}
\thanks{J. Wang is with School of Cyber Science and Engineering, Southeast University, China, and with the Bernoulli Institute for Mathematics, Computer Science and Artificial Intelligence, University of Groningen, The Netherlands (e-mail: jiwei.wang@rug.nl)}
\thanks{S. Baldi is with School of Mathematics, Southeast University, China (email: simonebaldi@seu.edu.cn).}
\thanks{H. J. van Waarde is with the Bernoulli Institute for Mathematics, Computer Science and Artificial Intelligence, University of Groningen, The Netherlands (e-mail: h.j.van.waarde@rug.nl).}}

\maketitle

\begin{abstract}
	The objective of model reference control is to design a controller that regulates the system's behavior so as to match a specified reference model.
	This paper investigates necessary and sufficient conditions for model reference control from a data-driven perspective, when only a set of data generated by the system is utilized to directly accomplish the matching.
	Noiseless and noisy data settings are both considered.
	Notably, all methods we propose build on the concept of data informativity and do not rely on persistently exciting data.
\end{abstract}

\begin{IEEEkeywords}
Data-driven control, model reference control, data informativity, quadratic matrix inequalities.
\end{IEEEkeywords}

\hyphenation{MRC}

\section{Introduction}

In many applications of control, spanning from flight to process control, desirable performance is specified in terms of reference dynamics.
The goal is to find a controller such that the closed-loop dynamics matches the given reference model.
Such an architecture is often referred to as model reference control (MRC).
In some MRC scenarios with system uncertainty, the data generated by the system are used online to update the controller parameters, leading to the study of model reference adaptive control (MRAC) \cite{ioannou2006adaptive}.
In classical MRAC theory, a persistency of excitation condition is imposed on the system data to guarantee the convergence of the parameters to their ideal values.
However, persistency of excitation is known to be conservative and techniques such as combined/composite MRAC \cite{lavretsky2009combined} and concurrent learning \cite{chowdhary2010concurrent}, have been proposed to relax this condition, leading to weaker conditions on the data, such as initial excitation \cite{roy2017combined} and finite excitation \cite{lee2019concurrent}.

When only a set of data collected offline from the system can be utilized for control, we obtain scenarios in line with the theory of offline data-driven control, aiming to map the collected data to the controller gains directly \cite{hou2013model}.
In this area, a result that plays an important role is the \textit{fundamental lemma} by Willems et al. \cite{willems2005note}, 
which provides a data-dependent representation of the system, using persistently exciting data.
Apart from MRC\cite{breschi2021direct}, the fundamental lemma leads to plenty of other results, such as robust control \cite{de2019formulas}, stochastic optimal control\cite{pan2022stochastic}, and model predictive control\cite{schmitz2022willems}.
Recently, \cite{van2020data} proposed a new framework to study the informativity of data and showed that persistency of excitation is not necessary in certain data-driven analysis and control problems. 
To apply the informativity framework to noisy data, \cite{van2020noisy} extended the S-lemma \cite{polik2007survey} to matrix S-lemma, further studied in \cite{van2023quadratic}.
We refer to \cite{van2023informativity} for more studies on the informativity framework for data-driven analysis and control.

This paper studies data-driven MRC in both the settings of noiseless and noisy data. 
Sufficient conditions for data-driven MRC have been studied in \cite{breschi2021direct}, where the design is based on persistently exciting data.
Our aim is to develop \emph{necessary and sufficient} conditions under which the data contain enough information for the design of a model reference controller. 
In case that the data are informative, we also want to develop methods for finding suitable controllers from the given data. 
We will follow the data informativity framework, utilizing and extending results from \cite{van2023quadratic} and \cite{van2022data}.
The main contributions of this paper are the following:
\begin{itemize}
	\item We define the notion of informative data for MRC. 
	A set of data are informative for MRC if there exists a controller that achieves model matching for \emph{all} systems consistent with the data.
	We provide a necessary and sufficient condition for this notion of informative data in Theorem \ref{TDIMRC} and provide a method for finding a model reference controller via data-based linear matrix inequalities (LMIs) in Corollary \ref{CDI}.
	\item To address the noisy scenario, we assume that the noise satisfies a general	quadratic matrix inequality  (QMI) and we define the notion of informative data for approximate MRC. 
	A set of data are informative for approximate MRC if there exists a controller such that the distance of any consistent closed-loop system to the reference model is less than a prescribed threshold.
	We provide a necessary and sufficient condition for this notion of informative data in Theorem \ref{TDIAMRC} using the matrix S-lemma and matrix Finsler's lemma.
	\item We find a necessary and sufficient condition to guarantee the stability of \emph{all} systems satisfying approximate matching conditions (cf. Theorem \ref{TI}).
	To avoid the cases where, although approximate MRC is achieved, the closed-loop system may not be stable, we obtain a necessary and sufficient condition for a set of data to achieve approximate MRC with guaranteed stability in Theorem \ref{TDIAMRCSG}.
\end{itemize}

This paper is structured as follows. 
Section II recalls preliminary results.
In Section III we study informativity for MRC in the noiseless setting, while Section IV resolves the problem of approximate MRC in the noisy case. 
These results are further studied in Section V to guarantee the stability of the closed-loop system.
In Section VI, a numerical example of a highly maneuverable aircraft model is provided.
Section VII concludes this paper.

\section{Preliminaries}

\subsection{Notations}
We denote the field of real numbers by $ \mathbb{R} $, the space of real $ n $-dimensional vectors by $ \mathbb{R}^n $, the space of real $ n \times m $ matrices by $ \mathbb{R}^{n\times m} $ and the space of all real \textit{symmetric} $ n \times n $ matrices by $ \mathbb{S}^{n} $.
Given any matrix $ A $, its \textit{transpose} is denoted by $ A^{\top} $ and its \textit{Moore-Penrose pseudo-inverse} is denoted by $ A^{\dagger} $.
A matrix $ A \in \mathbb{S}^{n} $ is said to be \textit{positive definite} (denoted by $ A>0 $) if $ x^{\top}Ax>0 $ for any nonzero $ x\in\mathbb{R}^n $, and \textit{positive semidefinite} (denoted by $ A\geq0 $) if $ x^{\top}Ax\geq0 $ for any $ x\in\mathbb{R}^n $.
\textit{Negative definite} and \textit{negative semidefinite} matrices are defined analogously and denoted by $ A<0 $ and $ A\leq0 $, respectively.
We write $ A\not\leq0 $ if $ A $ is not negative semidefinite.
A square matrix is \textit{Schur} if all its eigenvalues have modulus strictly less than $ 1 $.

\subsection{Quadratic matrix inequalities}

Define the sets
\begin{align*}
\mathcal{Z}_{r}(\Pi):=&\left\{Z\in \mathbb{R}^{r \times q}\Bigg|\begin{bmatrix}
I_q\\Z
\end{bmatrix}^{\top}\Pi
\begin{bmatrix}
I_q\\Z
\end{bmatrix}\geq0\right\},\\
\mathcal{Z}^0_{r}(\Pi):=&\left\{Z\in \mathbb{R}^{r \times q}\Bigg|\begin{bmatrix}
I_q\\Z
\end{bmatrix}^{\top}\Pi
\begin{bmatrix}
I_q\\Z
\end{bmatrix}=0\right\},\\
\text{and }\qquad\mathcal{Z}^+_{r}(\Pi):=&\left\{Z\in \mathbb{R}^{r \times q}\Bigg|\begin{bmatrix}
I_q\\Z
\end{bmatrix}^{\top}\Pi
\begin{bmatrix}
I_q\\Z
\end{bmatrix}>0\right\},\qquad\qquad\qquad
\end{align*}
where \begin{equation}\label{pi}
\Pi=\begin{bmatrix}
\Pi_{11}&\Pi_{12}\\
\Pi_{21}&\Pi_{22}
\end{bmatrix}\in \mathbb{S}^{q+r}.
\end{equation}
Denote the (generalized) Schur complement of $\Pi$ with respect to $\Pi_{22}$ as $ \Pi|\Pi_{22}=\Pi_{11}-\Pi_{12}\Pi^{\dagger}_{22}\Pi_{21} $.
Consider the case that $ \Pi_{22}\leq 0 $ and $ \ker\Pi_{22}\subseteq\ker\Pi_{12} $, implying that 
$ \forall Z \in \mathbb{R}^{r \times q}, $
\begin{equation}\label{Pi}
\begin{bmatrix}
I_q\\Z
\end{bmatrix}^{\top}\Pi
\begin{bmatrix}
I_q\\Z
\end{bmatrix}=\Pi|\Pi_{22}+(Z+\Pi_{22}^{\dagger}\Pi_{21})^{\top}\Pi_{22}(Z+\Pi_{22}^{\dagger}\Pi_{21}).
\end{equation}
Under the above two conditions on $ \Pi_{22} $, we have that $ \mathcal{Z}_{r}(\Pi) \neq \emptyset $ if and only if $ \Pi|\Pi_{22}\geq 0 $.
Hence, we define the set
\begin{equation*}
\mathbf{\Pi}_{q,r}=\left\{\Pi\in\mathbb{S}^{q+r}\big|\Pi_{22}\leq 0, \ker\Pi_{22}\subseteq\ker\Pi_{12}, \Pi|\Pi_{22}\geq0\right\},
\end{equation*}
and study the set $ \mathcal{Z}_{r}(\Pi) $ with $ \Pi \in \mathbf{\Pi}_{q,r} $ \cite{van2023quadratic}.
These notions are instrumental to recall three results about QMIs.
\begin{lemma}\label{LZ}
	($ \!\!\! $\cite[Lemma 4.5]{van2023quadratic}).
	Let $ N\in \mathbf{\Pi}_{q,r} $. 
	Let $ x\in \mathbb{R}^q $ and $ y\in \mathbb{R}^r $ be vectors, with $ x $ nonzero, such that $ \begin{bmatrix}
	x\\y
	\end{bmatrix}^\top N \begin{bmatrix}
	x\\y
	\end{bmatrix}\geq0. $
	Then, 
	$
	\exists Z \in \mathcal{Z}_{r}(N) $, s.t. $ \begin{bmatrix}
	x\\y
	\end{bmatrix}= \begin{bmatrix}
	I\\Z
	\end{bmatrix}x.
	$
\end{lemma}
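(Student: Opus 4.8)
The plan is to reduce the matrix membership $Z \in \mathcal{Z}_r(N)$ to a single scalar inequality by completing the square, and then to exhibit $Z$ explicitly as a rank-one perturbation of the ``center'' $-N_{22}^{\dagger}N_{21}$ of the QMI.

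First I would establish a pointwise version of the identity \eqref{Pi} for the given vectors. Setting $w := y + N_{22}^{\dagger}N_{21}x$ and using that $N \in \mathbf{\Pi}_{q,r}$ forces $N_{12}N_{22}^{\dagger}N_{22} = N_{12}$ (this is exactly where $\ker N_{22} \subseteq \ker N_{12}$ enters, since $N_{22}^{\dagger}N_{22}$ is the orthogonal projection onto $\im N_{22} = (\ker N_{22})^{\perp}$), a direct expansion gives
\[
\begin{bmatrix} x\\ y\end{bmatrix}^{\top} N \begin{bmatrix} x\\ y\end{bmatrix} = x^{\top}(N|N_{22})x + w^{\top}N_{22}w.
\]
Writing $M := N|N_{22}$, $\beta := x^{\top}Mx$ and $\alpha := w^{\top}N_{22}w$, the hypotheses $N|N_{22}\geq 0$ and $N_{22}\leq 0$ give $\beta\geq 0$ and $\alpha\leq 0$, while the assumed inequality reads $\beta + \alpha \geq 0$, i.e. $\beta \geq -\alpha \geq 0$.

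Next I would produce the candidate $Z$. The natural guess is to perturb the minimizer $-N_{22}^{\dagger}N_{21}$ of the QMI by a rank-one term carrying $x$ to $y$: take $Z := -N_{22}^{\dagger}N_{21} + w u^{\top}$ for a vector $u$ with $u^{\top}x = 1$. Any such $u$ yields $Zx = y$, so the constraint $\begin{bmatrix} x\\ y\end{bmatrix} = \begin{bmatrix} I\\ Z\end{bmatrix}x$ holds. Substituting into \eqref{Pi} and using $Z + N_{22}^{\dagger}N_{21} = w u^{\top}$ collapses the quadratic term to a rank-one matrix,
\[
\begin{bmatrix} I_q\\ Z\end{bmatrix}^{\top} N \begin{bmatrix} I_q\\ Z\end{bmatrix} = M + \alpha\, u u^{\top},
\]
so it remains to choose $u$ making the right-hand side positive semidefinite.

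The crux is that the naive choice $u = x/\norm{x}^2$ does \emph{not} in general keep $M + \alpha u u^{\top}\geq 0$, because the correction direction need not lie in the range of $M$. The fix is to align the correction with $M$: assuming for the moment $\beta > 0$, I would set $u := Mx/\beta$, which still satisfies $u^{\top}x = 1$. Then $M + \alpha u u^{\top} = M^{\frac{1}{2}}\bigl(I + (\alpha/\beta^{2})\,\tilde x \tilde x^{\top}\bigr)M^{\frac{1}{2}}$ with $\tilde x := M^{\frac{1}{2}}x$ and $\norm{\tilde x}^2 = \beta$; the bracketed matrix has smallest eigenvalue $1 + \alpha/\beta$, which is nonnegative precisely because $\beta + \alpha \geq 0$. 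Hence $M + \alpha u u^{\top}\geq 0$ and $Z \in \mathcal{Z}_r(N)$. The degenerate case $\beta = 0$ forces $\alpha = 0$ by $\beta \geq -\alpha \geq 0$, so the rank-one term vanishes for any admissible $u$ (e.g. $u = x/\norm{x}^2$, well defined since $x \neq 0$), leaving $\begin{bmatrix} I_q\\ Z\end{bmatrix}^{\top}N\begin{bmatrix} I_q\\ Z\end{bmatrix} = M \geq 0$. I expect this last positivity check to be the main obstacle: the key insight is that the correction must be taken in the direction $Mx$ rather than $x$, after which the matrix inequality factors through $M^{\frac{1}{2}}$ and reduces to the scalar condition $1 + \alpha/\beta \geq 0$ delivered by the hypothesis.
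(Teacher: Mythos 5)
Your proof is correct. The paper itself contains no proof of Lemma \ref{LZ} — it is quoted verbatim from \cite[Lemma 4.5]{van2023quadratic} — and your argument is essentially the standard proof of that cited result: complete the square through the generalized Schur complement $N|N_{22}$ (using $\ker N_{22}\subseteq\ker N_{12}$ to justify $N_{12}N_{22}^{\dagger}N_{22}=N_{12}$), then set $Z=-N_{22}^{\dagger}N_{21}+wu^{\top}$ with the correction direction $u$ taken proportional to $(N|N_{22})x$ rather than $x$, splitting into the cases $x^{\top}(N|N_{22})x>0$ and $x^{\top}(N|N_{22})x=0$; your handling of both cases, including the observation that the naive choice $u=x/\norm{x}^2$ fails when $x$ has a component in $\ker(N|N_{22})$, is sound.
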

\vspace{0.3em}
\begin{lemma}\label{MSL}
	(Matrix S-lemma, \cite[Thm. 4.7]{van2023quadratic}). Let $ M,N\in \mathbb{S}^{q+r} $.
	Assume that $ N\in \mathbf{\Pi}_{q,r} $ and $ N \not\leq 0 $.
	Then, $ \mathcal{Z}_r(N)\subseteq \mathcal{Z}_r(M) $ if and only if 
	$
	\exists \alpha\geq0, \text{ s.t. } M-\alpha N\geq 0.
	$
\end{lemma}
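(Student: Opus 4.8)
The plan is to prove the two implications separately, with the ``if'' direction being routine and the ``only if'' direction carrying the real content. For sufficiency, suppose $M-\alpha N\geq 0$ for some $\alpha\geq 0$. I would take an arbitrary $Z\in\mathcal{Z}_r(N)$ and left- and right-multiply $M-\alpha N\geq 0$ by $\begin{bmatrix}I_q\\Z\end{bmatrix}^{\top}$ and $\begin{bmatrix}I_q\\Z\end{bmatrix}$, respectively. This yields
\[
\begin{bmatrix}I_q\\Z\end{bmatrix}^{\top}M\begin{bmatrix}I_q\\Z\end{bmatrix}\geq \alpha\begin{bmatrix}I_q\\Z\end{bmatrix}^{\top}N\begin{bmatrix}I_q\\Z\end{bmatrix}\geq 0,
\]
where the last inequality uses $\alpha\geq 0$ together with $Z\in\mathcal{Z}_r(N)$. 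Hence $Z\in\mathcal{Z}_r(M)$, which proves $\mathcal{Z}_r(N)\subseteq\mathcal{Z}_r(M)$.

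For necessity, the strategy is to convert the matrix set inclusion into a single homogeneous quadratic implication on vectors and then invoke the classical (scalar) S-lemma from \cite{polik2007survey}, whose strict-feasibility hypothesis is exactly the assumption $N\not\leq 0$. Concretely, I would show that $\mathcal{Z}_r(N)\subseteq\mathcal{Z}_r(M)$ forces
\[
w^{\top}Nw\geq 0 \implies w^{\top}Mw\geq 0 \qquad \text{for all } w=\begin{bmatrix}x\\y\end{bmatrix}\in\mathbb{R}^{q+r}.
\]
The case $x\neq 0$ is handled directly by Lemma \ref{LZ}: since $N\in\mathbf{\Pi}_{q,r}$, any such $w$ with $w^{\top}Nw\geq 0$ can be written as $w=\begin{bmatrix}I\\Z\end{bmatrix}x$ for some $Z\in\mathcal{Z}_r(N)\subseteq\mathcal{Z}_r(M)$, and then $w^{\top}Mw=x^{\top}\big(\begin{bmatrix}I\\Z\end{bmatrix}^{\top}M\begin{bmatrix}I\\Z\end{bmatrix}\big)x\geq 0$.

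The delicate point, which I expect to be the main obstacle, is the degenerate case $x=0$, where Lemma \ref{LZ} does not apply. Here $w^{\top}Nw=y^{\top}N_{22}y$, and since $N_{22}\leq 0$ the premise $w^{\top}Nw\geq 0$ forces $y^{\top}N_{22}y=0$, hence $y\in\ker N_{22}\subseteq\ker N_{12}$. To establish $y^{\top}M_{22}y\geq 0$, I would fix any $Z_0\in\mathcal{Z}_r(N)$ (nonempty because $N|N_{22}\geq 0$) and any nonzero $x$, and consider the perturbed vectors $\begin{bmatrix}x\\Z_0x+sy\end{bmatrix}$. Using $N_{12}y=0$ and $N_{22}y=0$, the cross and quadratic terms in $s$ cancel, so this vector remains in the $N$-feasible cone for every $s\in\mathbb{R}$; since its first block is nonzero, the implication just proved shows that the scalar quadratic $s\mapsto\begin{bmatrix}x\\Z_0x+sy\end{bmatrix}^{\top}M\begin{bmatrix}x\\Z_0x+sy\end{bmatrix}$ is nonnegative for all $s$, and nonnegativity of its leading coefficient is precisely $y^{\top}M_{22}y\geq 0=w^{\top}Mw$.

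Finally, with the homogeneous implication established for all $w$, I would close the argument by applying the S-lemma: the hypothesis $N\not\leq 0$ guarantees a vector $\bar w$ with $\bar w^{\top}N\bar w>0$, i.e. strict feasibility, so the implication is equivalent to the existence of $\alpha\geq 0$ with $M-\alpha N\geq 0$. This produces the claimed multiplier and completes the proof.
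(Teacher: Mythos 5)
Within this paper, Lemma~\ref{MSL} is not proved at all: it is imported verbatim from \cite[Thm.~4.7]{van2023quadratic}, so there is no in-paper proof to compare your argument against; I can only judge your proof on its own merits, and it is correct. Sufficiency is the standard congruence argument. For necessity, the bridge you build from the matrix inclusion $\mathcal{Z}_r(N)\subseteq\mathcal{Z}_r(M)$ to the vector implication $w^{\top}Nw\geq 0\Rightarrow w^{\top}Mw\geq 0$ is exactly the right one, and both cases are handled soundly: for $x\neq 0$, Lemma~\ref{LZ} applies (this is precisely what that lemma is quoted for); for $x=0$, the hypotheses $N_{22}\leq 0$ and $\ker N_{22}\subseteq\ker N_{12}$ give $N_{22}y=0$ and $N_{12}y=0$, so your perturbed vector $w_s=\bigl[\,x^{\top}\;\;(Z_0x+sy)^{\top}\bigr]^{\top}$ indeed satisfies $w_s^{\top}Nw_s=x^{\top}\bigl(\begin{bmatrix}I\\Z_0\end{bmatrix}^{\top}N\begin{bmatrix}I\\Z_0\end{bmatrix}\bigr)x\geq 0$ for every $s$ (here you correctly use $\mathcal{Z}_r(N)\neq\emptyset$, guaranteed by $N|N_{22}\geq 0$), and nonnegativity of the quadratic polynomial $s\mapsto w_s^{\top}Mw_s$ forces its leading coefficient $y^{\top}M_{22}y=w^{\top}Mw$ to be nonnegative. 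The closing appeal to the classical S-lemma of \cite{polik2007survey} is legitimate: $N\not\leq 0$ supplies the strict-feasibility point, and since both forms are homogeneous, pointwise nonnegativity of $w^{\top}(M-\alpha N)w$ is equivalent to $M-\alpha N\geq 0$. This is, as far as the structure of the source reference indicates (Lemma~4.5 there immediately precedes Theorem~4.7 and is quoted here as Lemma~\ref{LZ}), essentially the same strategy used to prove the result originally, so your proposal should be regarded as a correct reconstruction rather than a genuinely different route.
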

\begin{lemma}\label{MFL}
	(Matrix Finsler's lemma, \cite[Thm. 4.8]{van2023quadratic}). 
	Let $ M=\begin{bmatrix}
	M_{11}&M_{12}\\M_{21}&M_{22}
	\end{bmatrix} \in \mathbb{S}^{q+r} $ and $ N=\begin{bmatrix}
	N_{11}&N_{12}\\N_{21}&N_{22}
	\end{bmatrix} \in \mathbb{S}^{q+r} $.
	Define $ \Theta:=\begin{bmatrix}
	I\\-N_{22}^{\dagger}N_{21}
	\end{bmatrix}^{\top}M
	\begin{bmatrix}
	I\\-N_{22}^{\dagger}N_{21}
	\end{bmatrix}\in \mathbb{S}^q $.
	Assume that $ M,N\in \mathbf{\Pi}_{q,r} $, $ N|N_{22} = 0 $ and $ \ker\Theta\subseteq\ker M|M_{22} $.
	Then, $ \mathcal{Z}_r^0(N)\subseteq \mathcal{Z}_r(M) $ if and only if 
	$
	\exists \alpha\geq0, \text{ s.t. } M-\alpha N\geq 0.
	$
\end{lemma}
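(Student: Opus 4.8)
The plan is to prove the two implications separately, the \emph{if} part being immediate and the \emph{only if} part carrying essentially all of the difficulty.

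For the \emph{if} direction, assume $M - \alpha N \geq 0$ for some $\alpha \geq 0$ and take any $Z \in \mathcal{Z}_r^0(N)$. Pre- and post-multiplying $M-\alpha N\ge 0$ by $\begin{bmatrix} I\\ Z\end{bmatrix}^\top$ and $\begin{bmatrix} I\\ Z\end{bmatrix}$ gives $\begin{bmatrix} I\\ Z\end{bmatrix}^\top M \begin{bmatrix} I\\ Z\end{bmatrix} \geq \alpha \begin{bmatrix} I\\ Z\end{bmatrix}^\top N \begin{bmatrix} I\\ Z\end{bmatrix} = 0$, where the last equality holds because $Z \in \mathcal{Z}_r^0(N)$ annihilates the $N$-form. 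Hence $Z \in \mathcal{Z}_r(M)$, and the inclusion follows.

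For the converse I would first unpack the hypotheses. Since $N \in \mathbf{\Pi}_{q,r}$, the completion of squares (\ref{Pi}) applies, and with $N|N_{22}=0$ it reads $\begin{bmatrix} I\\ Z\end{bmatrix}^\top N \begin{bmatrix} I\\ Z\end{bmatrix} = (Z-Z_0)^\top N_{22}(Z-Z_0)$, where $Z_0:=-N_{22}^{\dagger}N_{21}$. The same identity applied to a generic vector $\begin{bmatrix} u\\ v\end{bmatrix}$ shows that $N \leq 0$, since $N_{22}\le 0$. Consequently the $N$-form is nonpositive for every $Z$, so $\mathcal{Z}_r^0(N)=\mathcal{Z}_r(N)$, and (using $\ker N_{22}\subseteq \ker N_{12}$ to get $N_{22}N_{22}^{\dagger}N_{21}=N_{21}$) this common set equals the affine space $\{Z : N_{22}Z = -N_{21}\}=\{Z_0+W : N_{22}W=0\}$. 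In particular $Z_0$ is feasible, so the inclusion already forces $\Theta = \begin{bmatrix} I\\ Z_0\end{bmatrix}^\top M \begin{bmatrix} I\\ Z_0\end{bmatrix}\ge 0$.

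The main obstacle is now visible: because $N\le 0$, the matrix S-lemma (Lemma \ref{MSL}) cannot be applied directly — its standing hypothesis $N\not\le 0$ fails exactly in this equality-constrained, degenerate regime, which is precisely the gap Finsler's lemma is meant to close. I would resolve this through a perturbation-and-limit argument: construct $N_\varepsilon := N + \varepsilon E$, with $E$ naturally built from the reduced form $\Theta$, so that $N_\varepsilon\in\mathbf{\Pi}_{q,r}$, $N_\varepsilon \not\le 0$, and $\mathcal{Z}_r(N_\varepsilon)\subseteq \mathcal{Z}_r(M)$ for all small $\varepsilon>0$; verifying that such an $E$ exists and that the inclusion survives the perturbation is the first technical hurdle. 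Lemma \ref{MSL} then yields multipliers $\alpha_\varepsilon\ge 0$ with $M-\alpha_\varepsilon N_\varepsilon\ge 0$, and one passes to the limit $\varepsilon\to 0$. The second and more delicate hurdle is to keep $\{\alpha_\varepsilon\}$ bounded, so that a convergent subsequence yields $M-\alpha N\ge 0$ with $\alpha\ge 0$; this is exactly where the constraint qualification $\ker\Theta\subseteq\ker M|M_{22}$ enters, ruling out the pathology in which the inclusion along $\mathcal{Z}_r^0(N)$ can only be certified by multipliers diverging to $+\infty$. A perturbation-free alternative is to substitute $Z=Z_0+W$ with $N_{22}W=0$, expand $\begin{bmatrix} I\\ Z\end{bmatrix}^\top M\begin{bmatrix} I\\ Z\end{bmatrix}$ as a quadratic in $W$, and dualize the resulting semidefinite feasibility problem by a separating-hyperplane argument — with Lemma \ref{LZ} used to translate membership of $\ker N$ into membership of $\mathcal{Z}_r^0(N)$, and the same kernel condition again guaranteeing that the dual optimum is attained at a finite $\alpha$.
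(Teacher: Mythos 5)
The paper never proves this lemma: it is imported verbatim from \cite[Thm.~4.8]{van2023quadratic}, so there is no internal proof to compare against and your attempt must stand on its own. Your ``if'' direction is correct and complete. Your preliminary reductions for the converse are also correct and are the natural first steps: with $N\in\mathbf{\Pi}_{q,r}$ and $N|N_{22}=0$, the completion of squares underlying \eqref{Pi} does give $N\le 0$, hence $\mathcal{Z}_r^0(N)=\mathcal{Z}_r(N)$ is the affine set $\{-N_{22}^{\dagger}N_{21}+W : N_{22}W=0\}$, and feasibility of $Z_0=-N_{22}^{\dagger}N_{21}$ forces $\Theta\ge 0$.

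From that point on, however, nothing is actually proved: the two ``technical hurdles'' you defer are the entire mathematical content of the theorem. Concretely, (a) your proposed perturbation $N_\varepsilon=N+\varepsilon E$ with $E$ ``built from $\Theta$'' faces a structural obstruction, not a technical one: once $N_\varepsilon\not\le 0$, the set $\mathcal{Z}_r(N_\varepsilon)$ is no longer the affine set but a full-dimensional neighborhood of it, and the hypothesis $\mathcal{Z}_r^0(N)\subseteq\mathcal{Z}_r(M)$ gives \emph{no} control of the sign of the $M$-form off the affine set --- controlling it there is precisely what the conclusion asserts, so verifying $\mathcal{Z}_r(N_\varepsilon)\subseteq\mathcal{Z}_r(M)$ is essentially equivalent to the theorem itself. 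The one perturbation for which the inclusion is automatic is $N_\varepsilon=N+\varepsilon M$ (because $N\le 0$, any $Z$ with $\bigl[\begin{smallmatrix}I\\Z\end{smallmatrix}\bigr]^{\top}N_\varepsilon\bigl[\begin{smallmatrix}I\\Z\end{smallmatrix}\bigr]\ge 0$ automatically satisfies the $M$-inequality, and one can check $N+\varepsilon M\in\mathbf{\Pi}_{q,r}$); but even then you must separately treat the degenerate case $N+\varepsilon M\le 0$, where the S-lemma is again unavailable, and the S-lemma output $M-\alpha_\varepsilon(N+\varepsilon M)\ge 0$ only yields the desired $M-\alpha N\ge 0$ after showing $1-\alpha_\varepsilon\varepsilon>0$ --- exactly the multiplier bound you leave open. (b) The hypothesis $\ker\Theta\subseteq\ker M|M_{22}$ is only gestured at (``rules out diverging multipliers''); since the statement is false without it, any valid proof must invoke it at a concrete step, and your sketch never identifies that step. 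The same criticism applies to the ``perturbation-free alternative,'' which is named (dualize, separating hyperplane) but not executed. In short: the easy implication and the setup are done; the theorem itself remains unproven.
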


\section{Data Informativity for MRC in Noiseless Case}

Consider a discrete-time linear input/state system:
\begin{equation}\label{n1}
x(t+1)=A_{\rm s}x(t)+B_{\rm s}u(t),
\end{equation}
and a discrete-time reference system model:
\begin{equation}\label{r}
x_{\rm m}(t+1)=A_{\rm m}x_{\rm m}(t)+B_{\rm m}r(t),
\end{equation}
where $ x(t) \in \mathbb{R}^n $ is the system state, $ u(t) \in \mathbb{R}^m $ is the control input, $ x_{\rm m}(t) \in \mathbb{R}^n $ is the reference state, and $ r(t)\in \mathbb{R}^p $ is the reference input with $ m \geq p $.
The system matrices $ A_{\rm s} \in \mathbb{R}^{n\times n}, B_{\rm s} \in \mathbb{R}^{n\times m} $ are unknown, whereas the reference model matrices $ A_{\rm m} \in \mathbb{R}^{n\times n} $ and $B_{\rm m} \in \mathbb{R}^{n\times p} $ are given, with $ A_{\rm m} $ being Schur.
The problem of MRC is to find a controller
\begin{equation}\label{controller}
u(t) = Kx(t) + Lr(t),
\end{equation} 
where the gains $ K\in \mathbb{R}^{m\times n} $ and $ L\in \mathbb{R}^{m\times p} $ are such that 
\begin{equation}\label{mc}
A_{\rm s}+B_{\rm s}K=A_{\rm m},\ B_{\rm s}L=B_{\rm m}.
\end{equation}
The rationale for \eqref{mc}, commonly referred to as matching conditions in the literature \cite{ioannou2006adaptive}, is the following: 
substituting \eqref{controller} into \eqref{n1} and making use of \eqref{mc}, the dynamics of the closed-loop system \emph{matches} the dynamics of the reference model \eqref{r}. 
In fact, the error $e(t)=x(t)-x_{\rm m}(t)$ has dynamics
$
e(t+1)= A_{\rm m} e(t),
$
implying convergence of the error to zero for any initial condition. 
Unfortunately, as $ A_{\rm s}, B_{\rm s} $ are unknown, a solution to \eqref{mc} cannot be computed. 
In the following, we will view MRC from the perspective of data informativity.
To this end, for given gains $ K $ and $ L $, let us define the set of systems that match the reference model \eqref{r} as
\begin{equation*}
\Sigma_{\rm exact}^{K,L}=\{(A,B)\mid A+BK=A_{\rm m}, BL=B_{\rm m}\}.
\end{equation*}
Obviously, if $ (A_{\rm s},B_{\rm s})\in\Sigma_{\rm exact}^{K,L} $, matching is possible via the control law \eqref{controller}. 

Consider now the case that only a set of input/state data of system \eqref{n1} is available:
\begin{equation}\label{d}
\begin{aligned}
X:=&\begin{bmatrix}
x(0)&x(1)&\cdots&x(T)
\end{bmatrix},\\
U_-:=&\begin{bmatrix}
u(0)&u(1)&\cdots&u(T-1)
\end{bmatrix}.
\end{aligned}
\end{equation}
At this point we stress that we do not make any a priori assumptions on the input data $ U_{-} $.
They could have been generated randomly, or from an open-loop or closed-loop experiment.
We aim to find conditions on the set of data $ (U_-,X) $ under which a controller \eqref{controller} can be designed so that the resulting closed-loop system matches the reference model \eqref{r}. 
To define this in a mathematical way, denote
\begin{equation}\label{sd}
X_+:=\begin{bmatrix}
x(1)&\cdots&x(T)
\end{bmatrix},
X_-:=\begin{bmatrix}
x(0)&\cdots&x(T-1)
\end{bmatrix},
\end{equation}
and consider the set of all systems compatible with the data
\begin{align*}
\Sigma_{(U_-,X)}=\{(A,B)\mid X_+ = AX_- + BU_-\}.
\end{align*}
Since the data are generated by system \eqref{n1}, we must have $ (A_{\rm s},B_{\rm s})\in\Sigma_{(U_-,X)} $.
Yet, it is possible that $ \Sigma_{(U_-,X)} $ contains an infinite number of systems, and the data $ (U_-,X) $ do not allow to distinguish $ (A_{\rm s},B_{\rm s}) $ from any other system in $ \Sigma_{(U_-,X)} $.
Hence, we need to find gains $ K $ and $ L $ such that \emph{all} systems in $ \Sigma_{(U_-,X)} $ satisfy the matching condition with $ K $ and $ L $, leading to the definition of informativity for MRC.

\begin{definition}\label{d1} (Informativity for MRC).
	Let $ A_{\rm m} \in \mathbb{R}^{n\times n} $ be Schur and let $ B_{\rm m} \in \mathbb{R}^{n\times m} $. 
	The data $ (U_-,X) $, generated by system (\ref{n1}), are called informative for MRC if there exist control gains $ K $ and $ L $ such that $ \Sigma_{(U_-,X)} \subseteq \Sigma_{\rm exact}^{K,L} $.
\end{definition}

The problem is now to find necessary and sufficient conditions for the data $ (U_-,X) $ to be informative for MRC, and, if so, finding suitable gains $ K $ and $ L $.
This is addressed by the following theorem.

\begin{theorem}\label{TDIMRC}
	The data $ (U_-,X) $, generated by system (\ref{n1}), are informative for MRC if and only if there exist $ V_1\in \mathbb{R}^{T\times n} $ and $ V_2\in \mathbb{R}^{T\times p} $ such that
	\begin{align}\label{T1}
	X_-V_1=I,\ X_+V_1=A_{\rm m},\ X_-V_2=0,\ X_+V_2=B_{\rm m}.
	\end{align}
	Moreover, if $ V_1 $ and $ V_2 $ satisfy (\ref{T1}), then the control gains $ K=U_-V_1,\ L=U_-V_2 $ are such that $ \Sigma_{(U_-,X)} \subseteq \Sigma_{{\rm exact}}^{K,L} $.
\end{theorem}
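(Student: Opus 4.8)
The plan is to prove the two implications separately. The ``if'' direction (sufficiency), which simultaneously certifies the proposed gains $K=U_-V_1$, $L=U_-V_2$, follows by direct substitution; the ``only if'' direction (necessity) is the substantive one and rests on a linear-algebraic duality argument that converts the set containment $\Sigma_{(U_-,X)}\subseteq\Sigma_{\rm exact}^{K,L}$ into the solvability of \eqref{T1}.

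For sufficiency, I would take $V_1,V_2$ satisfying \eqref{T1}, set $K=U_-V_1$, $L=U_-V_2$, and pick any $(A,B)\in\Sigma_{(U_-,X)}$, so that $X_+=AX_-+BU_-$. Right-multiplying this data equation by $V_1$ and invoking $X_-V_1=I$, $X_+V_1=A_{\rm m}$ yields $A_{\rm m}=A X_-V_1+B U_-V_1=A+BK$; right-multiplying by $V_2$ and invoking $X_-V_2=0$, $X_+V_2=B_{\rm m}$ yields $B_{\rm m}=A X_-V_2+B U_-V_2=BL$. Hence $(A,B)\in\Sigma_{\rm exact}^{K,L}$, which proves both the sufficiency and the ``moreover'' claim at once.

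For necessity, assume the data are informative, with gains $K,L$ witnessing $\Sigma_{(U_-,X)}\subseteq\Sigma_{\rm exact}^{K,L}$, and abbreviate $D:=\begin{bmatrix}X_-\\U_-\end{bmatrix}$. Since $(A_{\rm s},B_{\rm s})\in\Sigma_{(U_-,X)}$, this set is the nonempty affine family $\{\begin{bmatrix}A&B\end{bmatrix}: \begin{bmatrix}A&B\end{bmatrix}D=X_+\}$, and as $(A,B)$ ranges over it the difference $\Delta:=\begin{bmatrix}A-A_{\rm s}&B-B_{\rm s}\end{bmatrix}$ ranges exactly over the matrices with $\Delta D=0$. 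The constraints $A+BK=A_{\rm m}$ and $BL=B_{\rm m}$ thus hold at the base point $(A_{\rm s},B_{\rm s})$ and, after subtracting it, impose $\Delta\begin{bmatrix}I\\K\end{bmatrix}=0$ and $\Delta\begin{bmatrix}0\\L\end{bmatrix}=0$ for every $\Delta$ with $\Delta D=0$; row-wise these read $\leftker D\subseteq\leftker\begin{bmatrix}I\\K\end{bmatrix}$ and $\leftker D\subseteq\leftker\begin{bmatrix}0\\L\end{bmatrix}$.

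The crux, and the step I expect to be the main obstacle, is converting these left-kernel inclusions into the existence of $V_1,V_2$. Using $\leftker M=(\im M)^{\perp}$, each inclusion is equivalent to a column-space containment, $\im\begin{bmatrix}I\\K\end{bmatrix}\subseteq\im D$ and $\im\begin{bmatrix}0\\L\end{bmatrix}\subseteq\im D$, which in turn furnish matrices $V_1,V_2$ with $DV_1=\begin{bmatrix}I\\K\end{bmatrix}$ and $DV_2=\begin{bmatrix}0\\L\end{bmatrix}$, i.e. $X_-V_1=I$, $U_-V_1=K$, $X_-V_2=0$, $U_-V_2=L$. It then remains to recover the two $X_+$ identities: substituting the true dynamics $X_+=A_{\rm s}X_-+B_{\rm s}U_-$ gives $X_+V_1=A_{\rm s}+B_{\rm s}K=A_{\rm m}$ and $X_+V_2=B_{\rm s}L=B_{\rm m}$, where the final equalities use that $(A_{\rm s},B_{\rm s})$ itself lies in $\Sigma_{\rm exact}^{K,L}$. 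This establishes \eqref{T1} and closes the proof.
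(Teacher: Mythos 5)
Your proposal is correct and takes essentially the same approach as the paper: sufficiency by right-multiplying the data equation $X_+=AX_-+BU_-$ by $V_1$ and $V_2$, and necessity by exploiting the affine structure of $\Sigma_{(U_-,X)}$ to obtain left-kernel inclusions, dualizing them to the image containments $\im\begin{bmatrix}I\\K\end{bmatrix}\subseteq\im\begin{bmatrix}X_-\\U_-\end{bmatrix}$ and $\im\begin{bmatrix}0\\L\end{bmatrix}\subseteq\im\begin{bmatrix}X_-\\U_-\end{bmatrix}$, and then recovering the $X_+$ identities by plugging in a data-consistent system. The only cosmetic difference is that the paper parametrizes $\Sigma_{(U_-,X)}$ as a particular solution plus $\alpha$-scaled homogeneous solutions and extracts the condition from the coefficient of $\alpha$, whereas you subtract the base point $(A_{\rm s},B_{\rm s})$ directly; the two arguments are interchangeable.
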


\begin{proof}
	(\underline{Sufficiency}):
	Define $ u=Kx+Lr $, where $ K=U_-V_1, L=U_-V_2 $.
	Then, for any $ (A,B) \in \Sigma_{(U_-,X)} $ we have
	\begin{align*}
	x(t\!+\!1)\!&=\!Ax(t)\!+\!BKx(t)\!+\!BLr(t)\\
	&=\!AX_-\!V_1x(t)\!+\!BU_-\!V_1x(t)\!+\!BU_-\!V_2r(t)\!+\!AX_-\!V_2r(t)\\
	&=\!X_+\!V_1x(t)\!+\!X_+\!V_2r(t)\!=\!A_{\rm m}x(t)\!+\!B_{\rm m}r(t).
	\end{align*}
	We obtain that for any $ (A,B)\in\Sigma_{(U_-,X)} $, the controller $ u=Kx+Lr $ with $ K=U_-V_1, L=U_-V_2 $ is such that $ \Sigma_{(U_-,X)}\subseteq \Sigma_{{\rm exact}}^{K,L} $, that is, $ (U_-,X) $ are informative for MRC.
	
	(\underline{Necessity}):
	Suppose there exist control gains $ K $ and $ L $ such that $ \Sigma_{(U_-,X)} \subseteq \Sigma_{{\rm exact}}^{K,L} $.
	Note that $ \Sigma_{(U_-,X)} $ is the solution set of the linear equation $ X_+ = AX_- + BU_- $. We denote the solution space of the corresponding homogeneous equation by:
	\vspace{-0.5ex}
	\begin{equation*}
	\Sigma^0_{(U_-,X)}=\{(A^0,B^0)\mid0= A^0X_- + B^0U_-\}.
	\end{equation*}
	Let $ (A^*,B^*)\in \Sigma_{(U_-,X)}, (A^0,B^0)\in \Sigma_{(U_-,X)}^0 $.
	Then, for any $\alpha\in \mathbb{R}$, 
	$ (A^*+\alpha A^0)X_-+(B^*+\alpha B^0)U_-=X_+ $,
	that is, $ (A^*+\alpha A^0,B^*+\alpha B^0)\in \Sigma_{(U_-,X)} $.
	Since $ \Sigma_{(U_-,X)} \subseteq \Sigma_{{\rm exact}}^{K,L} $, $ A^*+\alpha A^0+(B^*+\alpha B^0)K=A_{\rm m} $ and $ (B^*+\alpha B^0)L=B_{\rm m} $, which implies that $ A^0+B^0K=0 $, $ B^0L=0 $.
	Hence, \begin{equation*}
	\ker\begin{bmatrix}
	X_-^{\top}&U_-^{\top}
	\end{bmatrix}
	\subseteq 
	\ker\begin{bmatrix}
	I&K^{\top}
	\end{bmatrix},\ 
	\ker\begin{bmatrix}
	X_-^{\top}&U_-^{\top}
	\end{bmatrix}
	\subseteq \ker\begin{bmatrix}
	0&L^{\top}
	\end{bmatrix},
	\end{equation*} which is equivalent to 
	\begin{equation*}
	\text{im} \begin{bmatrix}
	I\\K
	\end{bmatrix} \subseteq \text{im} \begin{bmatrix}
	X_-\\U_-
	\end{bmatrix},\ \text{im} \begin{bmatrix}
	0\\L
	\end{bmatrix} \subseteq \text{im} \begin{bmatrix}
	X_-\\U_-
	\end{bmatrix}.
	\end{equation*}
	Then, there exists $ V_1 $ such that $ \begin{bmatrix}
	I\\K
	\end{bmatrix} = \begin{bmatrix}
	X_-\\U_-
	\end{bmatrix}V_1 $, implying
	\begin{equation*}
	X_-V_1=I,\
	X_+V_1=\begin{bmatrix}
	A^*&B^*
	\end{bmatrix}
	\begin{bmatrix}
	X_-\\U_-
	\end{bmatrix}V_1=A^*+B^*K=A_{\rm m}
	\end{equation*}
	and there exists  $ V_2 $ such that $ \begin{bmatrix}
	0\\L
	\end{bmatrix} = \begin{bmatrix}
	X_-\\U_-
	\end{bmatrix}V_2 $, implying
	\begin{equation*}\label{key}
	X_-V_2=0,\
	X_+V_2=\begin{bmatrix}
	A^*&B^*
	\end{bmatrix}
	\begin{bmatrix}
	X_-\\U_-
	\end{bmatrix}V_2=B^*L=B_{\rm m}.
	\end{equation*}
	We conclude that there exist $ V_1 $ and $ V_2 $ satisfying (\ref{T1}). 
\end{proof}
Condition \eqref{T1} is the same as \cite[eq. (18)]{breschi2021direct}.
However, \cite{breschi2021direct} assumes persistency of excitation of the input data, which is not required in Theorem \ref{TDIMRC}. In fact, Theorem \ref{TDIMRC} gives necessary and sufficient conditions under which a model reference controller can be found using the data.
These conditions are weaker than those required for unique system identification.
This is illustrated by the following example.

\begin{example}
	Consider the data set
	\begin{equation}\label{e1data}
	X=\begin{bmatrix}
	1&0&0&\frac{1}{2}\\
	1&0&1&0
	\end{bmatrix}\!,\
	U_-=\begin{bmatrix}
	1&-1&0\\
	1&-1&1
	\end{bmatrix}\!,
	\end{equation}
	and the model \eqref{r} with $ A_{\rm m}=\begin{bmatrix}
	-\frac{1}{2}&\frac{1}{2}\\0&-\frac{1}{2}
	\end{bmatrix} $ and 
	$ B_{\rm m}=\begin{bmatrix}
	0&0\\0&1
	\end{bmatrix} $.
	Because rank$ \begin{bmatrix}
	X_-\\
	U_-\\
	\end{bmatrix}=3 $, we know from \cite[Prop. 6]{van2020data} that the system cannot be uniquely identified.
	Indeed, given \eqref{e1data},
	\begin{equation*}
	\Sigma_{(U_-,X)} \! = \! \left\{\!\left(\!\begin{bmatrix}
	a-\frac{1}{2}&-a+\frac{1}{2}\\
	b&-b+1
	\end{bmatrix}\!,\begin{bmatrix}
	-a&a\\
	-b&b-1
	\end{bmatrix}\!\right)\! \bigg| a,b \in \mathbb{R}\right\},
	\end{equation*}
	that is, there is an infinite number of systems compatible with the data. 
	Despite this, by taking
	\begin{equation*}
	V_1\!=\!\begin{bmatrix}
	1&0\\0&-\frac{1}{2}\\-1&1
	\end{bmatrix}\!,\
	V_2\!=\!\begin{bmatrix}
	0&0\\0&1\\0&0
	\end{bmatrix}\!,
	\end{equation*}
	we obtain $ X_-V_1\!=\!I,\ X_+V_1\!=\!A_{\rm m}, X_-V_2\!=\!0, X_+V_2\!=\!B_{\rm m} $.
	Then, the gains solving the matching conditions for all systems in $ \Sigma_{(U_-,X)} $ are $ K\!=\!U_-V_1\!=\!\begin{bmatrix}
	1&\frac{1}{2}\\0&\frac{3}{2}
	\end{bmatrix}\! $ and $ L\!=\!U_-V_2\!=\!\begin{bmatrix}
	0&-1\\0&-1
	\end{bmatrix}\! $.\\
\end{example}

To establish a bridge between the noiseless setting and the noisy setting that will be studied in Section IV, it is convenient to present necessary and sufficient conditions for data informativity in terms of linear matrix inequalities (LMIs).

\begin{corollary}\label{CDI}
	The data $ (U_-,X) $, generated by system (\ref{n1}), are informative for MRC if and only if there exist gains $ K,L $ and $ \alpha_1,\alpha_2>0 $ such that
	\begin{align}\label{T1K}
	\renewcommand{\arraystretch}{1.12}
	\begin{bmatrix}
	X_+X_+^{\top}&-X_+X_-^{\top}&-X_+U_-^{\top}&-A_{\rm m}\\
	-X_-X_+^{\top}&X_-X_-^{\top}&X_-U_-^{\top}&I\\
	-U_-X_+^{\top}&U_-X_-^{\top}&U_-U_-^{\top}&K\\
	-A_{\rm m}^{\top}&I&K^{\top}&\alpha_1I
	\end{bmatrix}\geq 0,\\
	\label{T1L}
	\renewcommand{\arraystretch}{1.12}
	\begin{bmatrix}
	X_+X_+^{\top}&-X_+X_-^{\top}&-X_+U_-^{\top}&-B_{\rm m}\\
	-X_-X_+^{\top}&X_-X_-^{\top}&X_-U_-^{\top}&0\\
	-U_-X_+^{\top}&U_-X_-^{\top}&U_-U_-^{\top}&L\\
	-B_{\rm m}^{\top}&0&L^{\top}&\alpha_2I
	\end{bmatrix}\geq 0.
	\end{align}
\end{corollary}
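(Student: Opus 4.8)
The plan is to reduce each of the two LMIs to a column-space membership condition for the data matrix and then invoke Theorem~\ref{TDIMRC}. I first observe that the leading $(2n+m)\times(2n+m)$ block of \eqref{T1K} factors as an outer product: writing $\Psi=\begin{bmatrix}X_+\\-X_-\\-U_-\end{bmatrix}$, a direct computation shows that this block equals $\Psi\Psi^\top$, so the left-hand side of \eqref{T1K} has the form $\begin{bmatrix}\Psi\Psi^\top & c_1\\ c_1^\top & \alpha_1 I\end{bmatrix}$ with $c_1=\begin{bmatrix}-A_{\rm m}\\I\\K\end{bmatrix}$. The analogous factorization holds for \eqref{T1L} with the same $\Psi$ and $c_2=\begin{bmatrix}-B_{\rm m}\\0\\L\end{bmatrix}$. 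Since the trailing block is $\alpha_i I$ with $\alpha_i>0$, an ordinary Schur complement gives, for $i=1,2$, that the $i$-th LMI holds if and only if $\alpha_i\Psi\Psi^\top\ge c_ic_i^\top$.

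Next I would prove the key equivalence: for fixed $c_i$, there exists $\alpha_i>0$ with $\alpha_i\Psi\Psi^\top\ge c_ic_i^\top$ if and only if $\im c_i\subseteq\im\Psi$. The ``if'' direction is immediate, since $c_i=\Psi W$ gives $c_ic_i^\top=\Psi WW^\top\Psi^\top\le\|W\|^2\,\Psi\Psi^\top$ (using $WW^\top\le\|W\|^2 I$), so any $\alpha_i\ge\|W\|^2$ with $\alpha_i>0$ works. For ``only if'', I evaluate the inequality on any $x\in\ker\Psi^\top$: the left-hand side vanishes, forcing $x^\top c_ic_i^\top x\le0$ and hence $c_i^\top x=0$; thus $\ker\Psi^\top\subseteq\ker c_i^\top$, which upon taking orthogonal complements is exactly $\im c_i\subseteq\im\Psi$. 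Here I also use the standard identities $\im(\Psi\Psi^\top)=\im\Psi$ and $\im(c_ic_i^\top)=\im c_i$.

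It then remains to translate the image conditions into the linear equations of Theorem~\ref{TDIMRC}. The condition $\im c_1\subseteq\im\Psi$ means $c_1=\Psi V$ for some $V\in\mathbb{R}^{T\times n}$; reading off the three block rows and setting $V_1:=-V$ yields exactly $X_-V_1=I$, $X_+V_1=A_{\rm m}$, together with $K=U_-V_1$. Crucially, the first two equations do not involve $K$, so the existence of some gain $K$ and some $\alpha_1>0$ making \eqref{T1K} hold is equivalent to the mere solvability of $X_-V_1=I,\ X_+V_1=A_{\rm m}$, the gain then being recovered as $K=U_-V_1$. The same argument applied to $c_2$ shows that \eqref{T1L} is feasible for some $L,\alpha_2>0$ if and only if there is $V_2$ with $X_-V_2=0,\ X_+V_2=B_{\rm m}$ and $L=U_-V_2$. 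Combining the two equivalences, the pair \eqref{T1K}--\eqref{T1L} is feasible if and only if $V_1,V_2$ satisfying \eqref{T1} exist, which by Theorem~\ref{TDIMRC} is precisely informativity for MRC.

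I expect the only delicate point to be the scaling step $\alpha_i\Psi\Psi^\top\ge c_ic_i^\top$: one must verify that a single finite $\alpha_i$ suffices, which the spectral-norm bound $WW^\top\le\|W\|^2 I$ guarantees, and that the strict requirement $\alpha_i>0$ (rather than $\ge0$) is harmless since $\|W\|^2$ may always be enlarged. The remaining steps are routine block manipulations together with the standard image and kernel identities.
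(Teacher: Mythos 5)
Your proof is correct and follows essentially the same route as the paper's: a Schur complement with respect to the $\alpha_i I$ block reduces each LMI to $\alpha_i\Psi\Psi^\top\ge c_ic_i^\top$, a kernel/image argument converts this to $\im c_i\subseteq\im\Psi$ (with the scaling $\alpha_i\ge\|V_i\|^2$ for the converse), and Theorem~\ref{TDIMRC} finishes the equivalence. The only cosmetic difference is that you package both directions into a single feasibility-iff-image-inclusion lemma (and handle the sign flip $V_1:=-V$ explicitly), whereas the paper splits the argument into separate sufficiency and necessity parts.
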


\begin{proof}
	(\underline{Sufficiency}):
	Suppose \eqref{T1K} and \eqref{T1L} hold.
	By computing the Schur complement of (\ref{T1K}) with respect to its fourth diagonal block, we obtain
	\begin{equation}\label{TP1K}
	\alpha_1\begin{bmatrix}
	X_+\\
	-X_-\\
	-U_-\\
	\end{bmatrix}
	\begin{bmatrix}
	X_+\\
	-X_-\\
	-U_-\\
	\end{bmatrix}^{\top}-
	\begin{bmatrix}
	-A_{\rm m}\\
	I\\
	K\\
	\end{bmatrix}\begin{bmatrix}
	-A_{\rm m}\\
	I\\
	K\\
	\end{bmatrix}^{\top}\geq 0.
	\end{equation}
	For any $ x \in \ker \begin{bmatrix}
	X_+^{\top}&
	-X_-^{\top}&
	-U_-^{\top}
	\end{bmatrix} $, we obtain from \eqref{TP1K} that 
	\begin{equation*}\label{key}
	x^{\top}\begin{bmatrix}
	-A_{\rm m}\\
	I\\
	K\\
	\end{bmatrix}\begin{bmatrix}
	-A_{\rm m}\\
	I\\
	K\\
	\end{bmatrix}^{\top}x\leq 0 \Rightarrow x \in \ker \begin{bmatrix}
	-A_{\rm m}\\
	I\\
	K\\
	\end{bmatrix}^{\top}.
	\end{equation*}
	Hence, we have 
	\begin{equation*}\label{key}
	\ker \begin{bmatrix}
	X_+\\
	-X_-\\
	-U_-\\
	\end{bmatrix}^{\top} \!\! \subseteq
	\ker \begin{bmatrix}
	-A_{\rm m}\\
	I\\
	K\\
	\end{bmatrix}^{\top} \! \!\Leftrightarrow \text{im} \begin{bmatrix}
	-A_{\rm m}\\
	I\\
	K\\
	\end{bmatrix} \! \subseteq 
	\text{im} \begin{bmatrix}
	X_+\\
	-X_-\\
	-U_-\\
	\end{bmatrix}\!\!.
	\end{equation*}
	Then, there exists $ V_1\in \mathbb{R}^{T\times n} $ such that
	\begin{equation}\label{V}
	\begin{bmatrix}
	-A_m \\	I \\ K
	\end{bmatrix} = \begin{bmatrix}
	X_+\\	-X_-\\	-U_-
	\end{bmatrix} V_1.
	\end{equation}
	Along similar steps, we obtain from \eqref{T1L} that there exists $ V_2\in \mathbb{R}^{T\times p} $ such that 
	\begin{equation}\label{W}
	\begin{bmatrix}
	-B_m \\	0 \\ L
	\end{bmatrix}  = 
	\begin{bmatrix}
	X_+\\	-X_-\\	-U_-^\top 
	\end{bmatrix} V_2.
	\end{equation}
	Thus, according to Theorem \ref{TDIMRC}, the data $ (U_-,X) $ are informative for MRC.
	
	(\underline{Necessity}):
	Suppose there exist gains $ K $ and $ L $ such that $ \Sigma_{(U_-,X)} \subseteq \Sigma_{{\rm exact}}^{K,L} $.
	According to Theorem \ref{TDIMRC}, there exist $ V_1 $ and $ V_2 $ such that \eqref{V} and \eqref{W} hold.
	Then, there must exist a large enough $ \alpha_1>0 $ such that
	\begin{align*}
	\alpha_1&\begin{bmatrix}
	X_+\\
	-X_-\\
	-U_-\\
	\end{bmatrix}
	\begin{bmatrix}
	X_+\\
	-X_-\\
	-U_-\\
	\end{bmatrix}^{\top}-
	\begin{bmatrix}
	-A_{\rm m}\\
	I\\
	K\\
	\end{bmatrix}
	\begin{bmatrix}
	-A_{\rm m}\\
	I\\
	K\\
	\end{bmatrix}^{\top}\\
	=&
	\begin{bmatrix}
	X_+\\
	-X_-\\
	-U_-\\
	\end{bmatrix}(\alpha_1I-V_1 V_1 \!^{\top})
	\begin{bmatrix}
	X_+\\
	-X_-\\
	-U_-\\
	\end{bmatrix}^{\top}\geq0.
	\end{align*}
	Since $ \begin{bmatrix}
	X_+\\
	-X_-\\
	-U_-\\
	\end{bmatrix}
	\begin{bmatrix}
	X_+\\
	-X_-\\
	-U_-\\
	\end{bmatrix}^{\top}-\begin{bmatrix}
	A_{\rm m}\\
	I\\
	K\\
	\end{bmatrix}(\alpha_1I)^{-1}
	\begin{bmatrix}
	A_{\rm m}\\
	I\\
	K\\
	\end{bmatrix}^{\top} \geq 0 $ and $ \alpha_1I > 0 $, we obtain the LMI \eqref{T1K} using a Schur complement argument.
	Along similar steps, we also conclude existence of a large enough $ \alpha_2>0 $ such that \eqref{T1L} holds.
\end{proof}

As compared to Theorem \ref{TDIMRC}, the significance in Corollary \ref{CDI} is mostly theoretical. 
As we will see in Theorem \ref{TDIAMRCSG}, the LMI conditions for noisy data boil down to those in Corollary \ref{CDI} in the special case of noiseless data.

\section{Approximate MRC Using Noisy Data}
Consider a discrete-time linear input/state system:
\begin{equation}\label{1}
x(t+1)=A_{\rm s}x(t)+B_{\rm s}u(t)+w(t),
\end{equation}
where $ x(t),u(t),A_{\rm s} $ and $ B_{\rm s} $ are as before, and $ w(t)\in \mathbb{R}^n $ is an unknown noise term.
The presence of the unknown noise makes it impossible, in general, to achieve exact data-driven matching.
In fact, we later show in Theorem \ref{TDIAMRC} that the matching conditions can hardly be satisfied with noisy data.
This motivates relaxing the exact matching conditions \eqref{mc} to approximate matching conditions.

Let $ D^A\geq0 $, $ D^B\geq0 $ be difference matrices, and $ \Gamma^A=\text{diag}(\gamma^{A}_1,\gamma^{A}_2,\dots,\gamma^{A}_n)>0 $, $ \Gamma^B=\text{diag}(\gamma^{B}_1,\gamma^{B}_2,\dots,\gamma^{B}_p) >0 $ be weight matrices, which determine the ``distance" between the closed-loop system and the reference model by
\begin{align}\label{a}
&D^A-(A_{\rm s}+B_{\rm s}K-A_{\rm m})\Gamma^A(A_{\rm s}+B_{\rm s}K-A_{\rm m})^\top \geq 0,\! \\
\label{b}
&D^B-(B_{\rm s}L-B_{\rm m})\Gamma^B(B_{\rm s}L-B_{\rm m})^\top \geq 0.
\end{align}
In other words, \eqref{a}-\eqref{b} determine approximate matching by allowing $ A_{\rm s} + B_{\rm s}K - A_{\rm m} $ and $ B_{\rm s}L - B_{\rm m} $ to be ``small".
For given $ K,L $, define the set of all systems that satisfy the conditions for approximate matching as
\begin{align*}
\Sigma^{K,L}\!=\! \{(A,B)|&D^A\!\geq\!(A\!+\!BK\!-\!A_{\rm m})\Gamma^A(A\!+\!BK\!-\!A_{\rm m})\!^\top\\ \text{and } &D^B\!\geq\!(BL\!-\!B_{\rm m})\Gamma^B(BL\!-\!B_{\rm m})\!^\top\}.
\end{align*}

Let $ (U_-,X) $ be a set of input/state data generated by the noisy system \eqref{1}, and let
\begin{equation*}
W_-:=\begin{bmatrix}
w(0)&\cdots&w(T-1)
\end{bmatrix}
\end{equation*}
be the matrix of unknown noise samples.
Assume the unknown noise $ W_- $ satisfies the noise model
\begin{equation}\label{noise}
\begin{bmatrix}
I \\ W_-^\top
\end{bmatrix}^\top
\Phi
\begin{bmatrix}
I \\ W_-^\top
\end{bmatrix} \geq 0
\end{equation}
for some $ \Phi=\begin{bmatrix}
\Phi_{11}&\Phi_{12}\\
\Phi_{21}&\Phi_{22}\\
\end{bmatrix} \in \mathbf{\Pi}_{n,T} $.
Let us recall that the noise model \eqref{noise} can capture energy bounds, individual sample bounds and sample covariance bounds even within a subspace \cite{van2023quadratic}.
We define the set of all systems explaining the data $ (U_-, X) $, i.e., all $ (A, B) $ satisfying
\begin{equation}\label{edata}
X_+ = AX_- + BU_- + W_-
\end{equation}
for some $ W_- $ satisfying \eqref{noise}, denoted by 
\begin{equation}\label{sigma}
\Sigma_{(U_-,X)}=\{(A,B) \mid \eqref{edata} \text{ holds for some } W_-^\top\in\mathcal{Z}_{T}(\Phi)\}.
\end{equation}
Similar to Section III, we must have $ (A_{\rm s},B_{\rm s})\in\Sigma_{(U_-,X)} $, and we need to find gains $ K $ and $ L $ such that \textit{all} systems in $ \Sigma_{(U_-,X)} $ approximately match \eqref{r}, which leads to the definition of informativity for approximate MRC.

\begin{definition}\label{D} (Informativity for approximate MRC).
	Let $ A_{\rm m} $, $ B_{\rm m} $, $ D^A $, $ D^B $, $ \Gamma^A $, $ \Gamma^B $ be the matrices in \eqref{r}, \eqref{a}-\eqref{b}.
	The data $ (U_-,X) $, generated by system (\ref{1}) with noise model (\ref{noise}), are called informative for approximate MRC if there exist gains $ K $ and $ L $ such that $ \Sigma_{(U_-,X)} \subseteq \Sigma^{K,L} $.
\end{definition}

We now aim to find conditions for the set of noisy data $ (U_-,X) $ to be informative for approximate MRC.
We tackle this problem in the framework of QMIs.
Denote
\begin{align}
\label{MK}
M^K&=\begin{bmatrix}
D^A-A_{\rm m}\Gamma^AA_{\rm m}^{\top}&A_{\rm m}\Gamma^A&A_{\rm m}\Gamma^AK^{\top},\\
\Gamma^AA_{\rm m}^{\top}&-\Gamma^A&-\Gamma^AK^{\top}\\
K\Gamma^AA_{\rm m}^{\top}&-K\Gamma^A&-K\Gamma^AK^{\top}\\
\end{bmatrix},\\
\label{ML}
M^L&=\begin{bmatrix}
D^B-B_{\rm m}\Gamma^BB_{\rm m}^{\top}&0&B_{\rm m}\Gamma^BL^{\top}\\
0&0&0\\
L\Gamma^BB_{\rm m}^{\top}&0&-L\Gamma^BL^{\top}\\
\end{bmatrix},\\
\label{N}
N&=\begin{bmatrix}
I&X_+\\
0&-X_-\\
0&-U_-\\
\end{bmatrix}
\begin{bmatrix}
\Phi_{11}&\Phi_{12}\\
\Phi_{21}&\Phi_{22}\\
\end{bmatrix}
\begin{bmatrix}
I&X_+\\
0&-X_-\\
0&-U_-\\
\end{bmatrix}^\top.
\end{align}
Then, $ D^A\geq(A+BK-A_m)\Gamma^A(A+BK-A_m)^\top $ is equivalent to
\begin{equation}\label{PMK}
\begin{bmatrix}
I\\A^\top\\B^\top
\end{bmatrix}^\top
M^K
\begin{bmatrix}
I\\A^\top\\B^\top
\end{bmatrix}^\top \geq 0,
\end{equation}
and $ D^B\geq(BL-B_m)\Gamma^B(BL-B_m)^\top $ is equivalent to
\begin{equation}\label{PML}
\begin{bmatrix}
I\\A^\top\\B^\top
\end{bmatrix}^\top
M^L
\begin{bmatrix}
I\\A^\top\\B^\top
\end{bmatrix}^\top \geq 0.
\end{equation}
Thus, $ \Sigma^{K,L} $ is equivalently written as
\begin{equation*}
\Sigma^{K,L}=\{(A,B) \mid \eqref{PMK} \text{ and } \eqref{PML} \text{ hold} \}.
\end{equation*}
Meanwhile, \eqref{noise} and \eqref{sigma} indicate that $ (A,B) \in \Sigma_{(U_-,X)} $ if and only if $ (A,B) $ satisfies the QMI
\begin{equation}\label{data}
\begin{bmatrix}
I\\A^\top\\B^\top
\end{bmatrix}^\top
N
\begin{bmatrix}
I\\A^\top\\B^\top
\end{bmatrix}^\top\geq 0.
\end{equation}
Hence, our goal in the noisy case is to find necessary and sufficient conditions on the data $ (U_-,X) $ under which there exist control gains $ K $ and $ L $ such that the QMI \eqref{data} implies the QMIs \eqref{PMK} and \eqref{PML}.
This is resolved in the following theorem. 


\begin{theorem}\label{TDIAMRC}
	Let $ \Phi \in \mathbf{\Pi}_{n,T} $. 
	Assume that	$ (i)\ N \not\leq 0 $; or $ (ii)\ D^A=D^B=0 $.
	Then, the data $ (U_-,X) $, generated by system (\ref{1}) with noise model (\ref{noise}), are informative for approximate MRC if and only if there exist $ K,L $ and $ \alpha_1,\alpha_2>0 $ such that
	\begin{equation}\label{TK}
	\begin{bmatrix}
	D^A&0&0&-A_{\rm m}\\
	0&0&0&I\\
	0&0&0&K\\
	-A_{\rm m}^{\top}&I&K^{\top}&(\Gamma^A)^{-1}
	\end{bmatrix}-
	\alpha_1\begin{bmatrix}
	I&X_+\\
	0&-X_-\\
	0&-U_-\\
	0&0
	\end{bmatrix}\Phi
	\begin{bmatrix}
	I&X_+\\
	0&-X_-\\
	0&-U_-\\
	0&0
	\end{bmatrix}^{\!\top}\!\geq 0,
	\end{equation}
	\begin{equation}\label{TL}
	\begin{bmatrix}
	D^B&0&0&-B_{\rm m}\\
	0&0&0&0\\
	0&0&0&L\\
	-B_{\rm m}^{\top}&0&L^{\top}&(\Gamma^B)^{-1}
	\end{bmatrix}-
	\alpha_2\begin{bmatrix}
	I&X_+\\
	0&-X_-\\
	0&-U_-\\
	0&0
	\end{bmatrix}\Phi
	\begin{bmatrix}
	I&X_+\\
	0&-X_-\\
	0&-U_-\\
	0&0
	\end{bmatrix}^{\!\top}\!\geq 0.
	\end{equation}
\end{theorem}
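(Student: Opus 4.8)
The plan is to recast informativity for approximate MRC as an inclusion between QMI solution sets and then apply the lossless S-procedure results in Lemmas \ref{MSL} and \ref{MFL}. Writing $Z=\begin{bmatrix} A^\top\\ B^\top\end{bmatrix}\in\mathbb{R}^{(n+m)\times n}$, the characterizations \eqref{data}, \eqref{PMK}, \eqref{PML} state exactly that $(A,B)\in\Sigma_{(U_-,X)}\iff Z\in\mathcal{Z}_{n+m}(N)$ and $(A,B)\in\Sigma^{K,L}\iff Z\in\mathcal{Z}_{n+m}(M^K)\cap\mathcal{Z}_{n+m}(M^L)$. Since the $A$-condition in $\Sigma^{K,L}$ involves only $K$ and the $B$-condition only $L$, informativity decouples into $\mathcal{Z}_{n+m}(N)\subseteq\mathcal{Z}_{n+m}(M^K)$ and $\mathcal{Z}_{n+m}(N)\subseteq\mathcal{Z}_{n+m}(M^L)$, each solvable by an independent choice of gain. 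In parallel, setting $F_K=\begin{bmatrix} -A_{\rm m}\\ I\\ K\end{bmatrix}$ and $F_L=\begin{bmatrix} -B_{\rm m}\\ 0\\ L\end{bmatrix}$ one has $M^K=\begin{bmatrix} D^A&0\\ 0&0\end{bmatrix}-F_K\Gamma^AF_K^\top$ and $M^L=\begin{bmatrix} D^B&0\\ 0&0\end{bmatrix}-F_L\Gamma^BF_L^\top$; because $\Gamma^A,\Gamma^B>0$, a Schur complement with respect to the $(\Gamma^A)^{-1}$ (resp. $(\Gamma^B)^{-1}$) block shows that \eqref{TK} is equivalent to $M^K-\alpha_1 N\geq0$ and \eqref{TL} to $M^L-\alpha_2 N\geq0$. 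Thus the theorem reduces to: $\mathcal{Z}_{n+m}(N)\subseteq\mathcal{Z}_{n+m}(M^K)$ iff $M^K-\alpha_1 N\geq0$ for some $\alpha_1(>0)$, and likewise for $M^L$.

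The sufficiency direction is the easy half of the S-procedure and needs neither hypothesis (i) nor (ii): if $M^K-\alpha_1 N\geq0$ with $\alpha_1\geq0$, then for every $Z\in\mathcal{Z}_{n+m}(N)$ we get $\begin{bmatrix} I\\ Z\end{bmatrix}^\top M^K\begin{bmatrix} I\\ Z\end{bmatrix}\geq\alpha_1\begin{bmatrix} I\\ Z\end{bmatrix}^\top N\begin{bmatrix} I\\ Z\end{bmatrix}\geq0$, so $Z\in\mathcal{Z}_{n+m}(M^K)$, and symmetrically for $M^L$; hence $\Sigma_{(U_-,X)}\subseteq\Sigma^{K,L}$.

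For necessity I would first record the structural fact $N\in\mathbf{\Pi}_{n,n+m}$, which, as in the data-informativity literature \cite{van2022data,van2023quadratic}, follows from $\Phi\in\mathbf{\Pi}_{n,T}$ and the congruence structure of \eqref{N}. I would then split on the sign of $N$. If $N\not\leq0$ (case (i)), the Matrix S-lemma (Lemma \ref{MSL}) applies directly to each pair $(M^K,N)$ and $(M^L,N)$ and yields $\alpha_1,\alpha_2\geq0$ with $M^K-\alpha_1 N\geq0$ and $M^L-\alpha_2 N\geq0$. If instead $N\leq0$, then by hypothesis we must be in case (ii), i.e. $D^A=D^B=0$; now $\begin{bmatrix} I\\ Z\end{bmatrix}^\top N\begin{bmatrix} I\\ Z\end{bmatrix}\leq0$ for all $Z$, so $\mathcal{Z}_{n+m}(N)=\mathcal{Z}^0_{n+m}(N)$ and the inclusions become $\mathcal{Z}^0_{n+m}(N)\subseteq\mathcal{Z}_{n+m}(M^K)$, exactly the setting of the Matrix Finsler's lemma (Lemma \ref{MFL}). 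To invoke it I would verify its hypotheses: $N\leq0$ with $N\in\mathbf{\Pi}_{n,n+m}$ forces $N|N_{22}=0$ (evaluate \eqref{Pi} at $Z=-N_{22}^\dagger N_{21}$); and $D^A=0$ makes $M^K=-F_K\Gamma^AF_K^\top\leq0$ with $M^K\in\mathbf{\Pi}_{n,n+m}$ and $M^K|M^K_{22}=D^A=0$, so $\ker M^K|M^K_{22}=\mathbb{R}^n$ and the condition $\ker\Theta\subseteq\ker M^K|M^K_{22}$ holds trivially (identically for $M^L$ via $D^B=0$). Finsler then delivers $\alpha_1,\alpha_2\geq0$, and in either branch the Schur complement returns \eqref{TK}, \eqref{TL}.

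The main obstacle is precisely this case analysis together with the verification of Finsler's hypotheses in the $N\leq0$ branch: one must recognize that the alternative $D^A=D^B=0$ is what guarantees $M^K|M^K_{22}=M^L|M^L_{22}=0$ and hence the kernel inclusion $\ker\Theta\subseteq\ker M|M_{22}$ required by Lemma \ref{MFL}; for $D^A\neq0$ one instead has $M^K|M^K_{22}=D^A$ with trivial kernel, and the inclusion can genuinely fail, so the S-lemma branch alone does not suffice. A secondary point is upgrading $\alpha_i\geq0$ to the strict $\alpha_i>0$ asserted in the statement: in the Finsler branch this is free because $-N\geq0$ permits increasing $\alpha_i$ arbitrarily, while in the S-lemma branch it follows from $M^K\not\geq0$ (the block $M^K_{22}=-\begin{bmatrix} I\\ K\end{bmatrix}\Gamma^A\begin{bmatrix} I & K^\top\end{bmatrix}$ is never positive semidefinite since $\Gamma^A>0$), ruling out $\alpha_1=0$; the analogous claim for $M^L$ needs a short separate argument in the degenerate case $L=0$.
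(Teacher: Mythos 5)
Your proposal follows essentially the same route as the paper: recast informativity as the two inclusions $\mathcal{Z}_{n+m}(N)\subseteq\mathcal{Z}_{n+m}(M^K)$ and $\mathcal{Z}_{n+m}(N)\subseteq\mathcal{Z}_{n+m}(M^L)$, show via a Schur complement that \eqref{TK}, \eqref{TL} are equivalent to $M^K-\alpha_1N\geq0$, $M^L-\alpha_2N\geq0$, apply the matrix S-lemma when $N\not\leq0$ and the matrix Finsler's lemma when $N\leq0$ (which forces case $(ii)$), and finally upgrade $\alpha_i\geq0$ to $\alpha_i>0$. However, one verification step fails as written: the claim that the Finsler hypotheses for $M^L$ hold ``identically'' to those for $M^K$. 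For $M^K$ the identity $M^K|M^K_{22}=D^A$ is indeed unconditional, because $F_K=\begin{bmatrix}-A_{\rm m}\\ I\\ K\end{bmatrix}$ contains an identity block, so $\begin{bmatrix}I\\ K\end{bmatrix}$ has full column rank and the pseudo-inverse identity you implicitly use is valid. The corresponding block of $F_L=\begin{bmatrix}-B_{\rm m}\\ 0\\ L\end{bmatrix}$ is $\begin{bmatrix}0\\ L\end{bmatrix}$, which need not have full column rank; a direct computation gives $M^L|M^L_{22}=D^B-B_{\rm m}\bigl(\Gamma^B-\Gamma^BL^\top(L\Gamma^BL^\top)^\dagger L\Gamma^B\bigr)B_{\rm m}^\top$, and the subtracted term is nonzero in general (e.g., $L=0$, $B_{\rm m}\neq0$ yields $M^L|M^L_{22}=-B_{\rm m}\Gamma^BB_{\rm m}^\top\leq 0$ with $D^B=0$). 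In that situation $M^L\notin\mathbf{\Pi}_{n,n+m}$ and Lemma \ref{MFL} simply cannot be invoked. The missing ingredient is exactly the paper's move: in the necessity direction with $D^A=D^B=0$, informativity together with the fact that the data are generated by \eqref{1} give $(A_{\rm s},B_{\rm s})\in\Sigma_{(U_-,X)}\subseteq\Sigma^{K,L}_{\rm exact}$, hence $B_{\rm s}L=B_{\rm m}$; substituting this into $M^L_{12}$ shows $M^L_{12}=-\begin{bmatrix}A_{\rm s}&B_{\rm s}\end{bmatrix}M^L_{22}$ and $M^L|M^L_{22}=D^B=0$, restoring $M^L\in\mathbf{\Pi}_{n,n+m}$.

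A related, milder imprecision: you assert $N\in\mathbf{\Pi}_{n,n+m}$ from ``$\Phi\in\mathbf{\Pi}_{n,T}$ and the congruence structure.'' The conditions $N_{22}\leq0$ and $\ker N_{22}\subseteq\ker N_{12}$ do follow that way, but the third condition $N|N_{22}\geq0$ does not: for arbitrary matrices $X_+,X_-,U_-$ it can fail, and it is established in the paper by noting that $(A_{\rm s},B_{\rm s})\in\Sigma_{(U_-,X)}$ (so $\mathcal{Z}_{n+m}(N)\neq\emptyset$) and invoking \eqref{Pi}. So data consistency is needed twice, and both times your write-up elides it. On the positive side, your flag that $\alpha_2>0$ requires care when $L=0$ points at a genuine subtlety: the paper's own proof treats it only with a ``similarly,'' and its block argument ($-L\Gamma^BL^\top-\alpha_2U_-\Phi_{22}U_-^\top\geq0$ forcing $\alpha_2\neq0$) does break down at $L=0$; your proposal is more candid than the paper on this point, though neither resolves it.
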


\begin{proof}
The proof makes use of the matrix S-lemma (Lemma \ref{MSL}) and matrix Finsler's lemma (Lemma \ref{MFL}).
	
	(\underline{Sufficiency}):
	Suppose there exist $ K,L $ and $ \alpha_1,\alpha_2>0 $ such that \eqref{TK} and \eqref{TL} holds.
	By calculating the Schur complement of \eqref{TK} and \eqref{TL} with respect to their fourth diagonal block, we obtain that
	\begin{equation}\label{PK}
	M^K-\alpha_1N\geq 0,
	\end{equation}
	\begin{equation}
	 M^L-\alpha_2N\geq 0.
	\end{equation}
	Since $ \forall (A,B) \in \Sigma_{(U_-,X)} $, (\ref{data}) holds, we have that \eqref{PMK}-\eqref{PML} hold, which implies $ (A,B) \in \Sigma^{K,L} $.
	Hence, $ \Sigma_{(U_-,X)} \subseteq \Sigma^{K,L} $, i.e., the data $ (U_-,X) $ are informative for approximate MRC.
	
	(\underline{Necessity}):
	Suppose $ (U_-,X) $ are informative for approximate MRC, that is, there exist gains $ K, L $, such that $ \Sigma_{(U_-,X)} \! \subseteq \!  \Sigma^{K,L} $.
	Since the data $ (U_-,X) $ are generated by \eqref{1}, we have that $ (A_{\rm s},B_{\rm s})  \! \in \!  \Sigma_{(U_-,X)} $.
	Since $  \Sigma_{(U_-,X)}  \! \subseteq \!  \Sigma^{K,L} $, any $ (A,B) \! \in \! \Sigma _{(U_-,X)}$ satisfies \eqref{PMK}-\eqref{PML}, which implies $ \mathcal{Z}_{n+m}(N) \! \subseteq \!  \mathcal{Z}_{n+m}(M^K) $ and $ \mathcal{Z}_{n+m}(N) \! \subseteq \!  \mathcal{Z}_{n+m}(M^L) $.\\
	 To apply Lemma \ref{MSL} and Lemma \ref{MFL}, we first show that $ N \in \mathbf{\Pi}_{n,n+m} $.
	With the assumption $ \Phi \in \mathbf{\Pi}_{n,T} $, we have $ \Phi_{22}\leq0 $, which implies that
	\begin{equation*}
	N_{22}=\begin{bmatrix}
	-X_-\\
	-U_-\\
	\end{bmatrix}\Phi_{22}
	\begin{bmatrix}
	-X_-\\
	-U_-\\
	\end{bmatrix}^{\top}\leq 0.
	\end{equation*}
	Then, since $ \ker\Phi_{22}\subseteq \ker\Phi_{12} $, for any $ x \in \ker N_{22}=
	\ker \Phi_{22}
	\begin{bmatrix}
	-X_-^{\top}&-U_-^{\top}
	\end{bmatrix}, $ we have
	\begin{equation*}
	N_{12}x=(\Phi_{12}+X_+\Phi_{22})
	\begin{bmatrix}
	-X_-\\
	-U_-\\
	\end{bmatrix}^{\top} x =0.
	\end{equation*}
	Hence, we have $ \ker N_{22} \subseteq \ker N_{12} $.
	Finally, according to \eqref{Pi}, $ (A_{\rm s},B_{\rm s}) \in \Sigma_{(U_-,X)} $ implies
	$
	N|N_{22}\geq 0,
	$
	that is, $ N \in \mathbf{\Pi}_{n,n+m} $.\\
	We now suppose that assumption $ (i) $ holds.
	Since $ N\not\leq0 $, we can apply Lemma \ref{MSL} with $ \mathcal{Z}_{n+m}(N)\subseteq \mathcal{Z}_{n+m}(M^K) $, obtaining that there exists $ \alpha_1\geq 0 $ such that (\ref{PK}) holds.
	Rewrite (\ref{PK}) as 
	$$ \begin{bmatrix}
	D^A&0&0\\
	0&0&0\\
	0&0&0\\
	\end{bmatrix}-
	\alpha_1N-
	\begin{bmatrix}
	-A_{\rm m}\\
	I\\
	K\\
	\end{bmatrix}\Gamma^A
	\begin{bmatrix}
	-A_{\rm m}\\
	I\\
	K\\
	\end{bmatrix}^\top\!\geq 0. $$
	Using a Schur complement argument, we obtain (\ref{TK}).
	Meanwhile, the $ (2, 2) $ block of (\ref{PK}) yields $ -\Gamma^A-\alpha_1X_-\Phi_{22}X_-^{\top}\geq 0 $, where $ \Gamma^A>0 $ implies $ \alpha_1\neq 0 $.
	Similarly, from $ \mathcal{Z}_{n+m}(N)\subseteq \mathcal{Z}_{n+m}(M^L) $, we have that there exists $ \alpha_2>0 $ such that (\ref{TL}) holds.
	\\
	Next, we suppose that assumption $ (ii) $ holds.
	Since the case $ N \not\leq 0 $ has been addressed, we only need to consider the case $ N \leq 0 $, which implies $ \mathcal{Z}_{n+m}(N)=\mathcal{Z}^0_{n+m}(N) $.\\
	We now verify the assumptions of Lemma \ref{MFL}. 
	First, using a Schur complement argument, $ N \leq 0 $ implies $ N|N_{22}\leq 0 $.
	On the other hand, $ N \in \mathbf{\Pi}_{n,n+m} $ implies $ N|N_{22}\geq 0 $.
	Hence, we have $ N|N_{22} = 0 $.
	Then, we show that $ M^K, M^L \in \mathbf{\Pi}_{n,n+m} $.
	Since the data $ (U_-,X) $ are generated by \eqref{1} and $ D^A=D^B=0 $, we have $ (A_{\rm s},B_{\rm s}) \in \Sigma_{(U_-,X)} \subseteq \Sigma^{K,L}_{\rm exact} = \{(A,B)|A+BK=A_{\rm m}, BL=B_{\rm m}\} $.
	According to \eqref{MK}, we have
	\begin{align*}
	M^K_{22}=&\begin{bmatrix}
	-\Gamma^A&-\Gamma^AK^{\top}\\
	-K\Gamma^A&-K\Gamma^AK^{\top}\\
	\end{bmatrix}=-\begin{bmatrix}
	I\\
	K
	\end{bmatrix}\Gamma^A
	\begin{bmatrix}
	I\\
	K
	\end{bmatrix}^{\top}\!\leq0,
	\\
	M^K_{12}=&\begin{bmatrix}
	A_{\rm m}\Gamma^A&A_{\rm m}\Gamma^AK^{\top}\\
	\end{bmatrix}=\begin{bmatrix}
	A_{\rm s}&B_{\rm s}
	\end{bmatrix}
	\begin{bmatrix}
	I\\
	K
	\end{bmatrix}\Gamma^A
	\begin{bmatrix}
	I\\
	K
	\end{bmatrix}^{\top}\\
	=&\begin{bmatrix}
	A_{\rm s}&B_{\rm s}
	\end{bmatrix}M^K_{22},\\
	M|M^K_{22}=&M^K_{11}-M^K_{12}(M^K_{22})^{\dagger}M^K_{21}\\
	=&M^K_{11}-\begin{bmatrix}
	A_{\rm s}&B_{\rm s}
	\end{bmatrix}M^K_{22}(M^K_{22})^{\dagger}M^K_{22}\begin{bmatrix}
	A_{\rm s}&B_{\rm s}
	\end{bmatrix}^{\top}\\
	=& -A_{\rm m}\Gamma^AA_{\rm m}^{\top}+
	\begin{bmatrix}
	A_{\rm s}&B_{\rm s}
	\end{bmatrix}\begin{bmatrix}
	I\\
	K
	\end{bmatrix}\Gamma^A\begin{bmatrix}
	I\\
	K
	\end{bmatrix}^{\top}\!
	\begin{bmatrix}
	A_{\rm s}&B_{\rm s}
	\end{bmatrix}^{\top}\\
	=&-A_{\rm m}\Gamma^AA_{\rm m}^{\top}+A_{\rm m}\Gamma^AA_{\rm m}^{\top}=0.
	\end{align*}
	Obviously, $ \ker M^K_{22} \subseteq\ker M^K_{12} $.
	Similarly, we can obtain from \eqref{ML} that $ M^L_{22}\leq 0 $, $ \ker M^L_{22}\subseteq\ker M^L_{12} $ and $ M|M^L_{22}=0 $.
	Hence, $ M^K, M^L \in \mathbf{\Pi}_{n,n+m} $.
	Finally, for any $ \Theta \in \mathbb{S}^n $, we have that $ \ker\Theta \subseteq \ker M^K|M^K_{22} = \ker M^L|M^L_{22} =\mathbb{R}^n $.\\
	Having verified its assumptions, we apply Lemma \ref{MFL} with $ \mathcal{Z}_{n+m}(N)=\mathcal{Z}^0_{n+m}(N)\subseteq \mathcal{Z}_{n+m}(M^K) $ and $ \mathcal{Z}_{n+m}(N)=\mathcal{Z}^0_{n+m}(N)\subseteq \mathcal{Z}_{n+m}(M^L) $, obtaining that there exist $ \alpha_1,\alpha_2\geq 0 $ such that $	M^K-\alpha_1N\geq 0 $, $ M^L-\alpha_1N\geq 0 $.
	Using a Schur complement argument and zooming in on the $ (2, 2) $ block as before, we have that there exist $ \alpha_1, \alpha_2>0 $ such that (\ref{TK}) and (\ref{TL}) hold.
	\\
	We thus conclude that under assumption $ (i) $ or $ (ii) $, there exist $ K,L $ and $ \alpha_1,\alpha_2>0 $ such that (\ref{TK}) and (\ref{TL}) hold.
\end{proof}

%
%
%
As explained at the beginning of this section, approximate MRC, as studied in Theorem \ref{TDIAMRC}, is relevant in the noisy data setting because exact MRC is not possible in most cases.
We illustrate this by supposing that the data $ (U_-, X) $ are generated by \eqref{1} with $ \Phi_{22}=-I $ and $ \Phi_{12}=0 $ in \eqref{noise}, giving a widely considered bounded-energy noise with $ W_-W_-^{\top}\leq \Phi_{11} $, see, e.g., \cite[P2254, (i)]{van2023quadratic}.
Clearly, it is almost never the case that $ W_-W_-^{\top} = \Phi_{11} $, since this would imply that the unknown noise is precisely equal to the upper bound $ \Phi_{11} $.
Then, in most cases, we have $ W_-W_-^{\top} \neq\Phi_{11} $, i.e.,
\begin{equation*}
\Phi_{11}\!\!-\!\!W_-W_-^{\top}\!\!\!
=\!\!\begin{bmatrix}
I\\
A_{\rm s}^\top\\
B_{\rm s}^\top\\
\end{bmatrix}^{\!\!\top}\!\!
\underbrace{
\begin{bmatrix}
I&X_+\\
0&-X_-\\
0&-U_-\\
\end{bmatrix}\!\!\!
\begin{bmatrix}
\Phi_{11}&0\\
0&-I\\
\end{bmatrix}\!\!\!
\begin{bmatrix}
I&X_+\\
0&-X_-\\
0&-U_-\\
\end{bmatrix}^{\!\!\top}\!\!
}_{N}
\begin{bmatrix}
I\\
A_{\rm s}^\top\\
B_{\rm s}^\top\\
\end{bmatrix}
\end{equation*}
has at least one positive eigenvalue, which implies $ N \not\leq 0 $ as in $ (i) $.
If we further impose exact matching with $ D^A=D^B=0 $, then it can be shown that \eqref{TK} and \eqref{TL} are never feasible, which shows the need for relaxing exact MRC.

\begin{remark}
	In practice, one may wish to optimize over the matrices  $ D^A $ and $ D^B $. 
	Because the conditions \eqref{TK} and \eqref{TL} are linear in $ D^A $ and $ D^B $, this can be conveniently done by formulating the semidefinite program that aims at minimizing tr$ (D^A) +\text{tr}(D^B) $, subject to \eqref{TK} and \eqref{TL}.
	This is illustrated in Section VI by means of a numerical example.
\end{remark}

We conclude this section with an example highlighting a drawback of approximate MRC, which provides a motivation for the next section.
\begin{example}\label{e2}
	Consider the data set 
	\begin{equation}\label{e2d}
	\begin{aligned}
	X=&\begin{bmatrix}
	0&1&0&-1&0&1&0&-1&0&1
	\end{bmatrix},\\
	U_-=&\begin{bmatrix}
	1&-1&-1&1&1&-1&-1&1&1
	\end{bmatrix},
	\end{aligned}
	\end{equation}
	with $ T=10 $ and noise satisfying \eqref{noise} with $\Phi=$ diag$ (0.1,-1,-1,-1,-1,-1,-1,-1,-1,-1) $.
	For given $ A_{\rm m}=0.9 $, $ D^A=0.2 $, $ \Gamma^A=1 $, we solve \eqref{TK} and obtain $ K=0.1, \alpha_1=1 $.
	For given $ B_{\rm m}=1 $, $ D^B=0.1 $, $ \Gamma^B=1 $, we solve \eqref{TL} and obtain $ L=1, \alpha_2=0.5 $.
	However, \eqref{e2d} implies that $ (1,1)\in \Sigma_{(U_-,X)} $, which gives $ A+BK=1.1 $.
	We note that the closed-loop system is unstable.
	In other words,	only imposing a small ``distance" between the closed-loop system and the stable reference model does not guarantee the stability of the closed-loop system.
\end{example}

\section{Including Stability Guarantees}

In this section, we will provide new necessary and sufficient conditions for approximate MRC with guaranteed stability.
Our objective is to find a condition under which there exists a controller that achieves both approximate MRC and stability.
This leads to the subsequent definition.

\begin{definition}\label{DS} (Informativity for approximate MRC with guaranteed stability).
	Let $ A_{\rm m} $, $ B_{\rm m} $, $ D^A $, $ D^B $, $ \Gamma^A $, $ \Gamma^B $ be the matrices in \eqref{r}, \eqref{a}-\eqref{b}.
	The data $ (U_-,X) $, generated by system (\ref{1}) with noise model (\ref{noise}), are called informative for approximate MRC with guaranteed stability if there exist gains $ K $ and $ L $ such that $ \Sigma_{(U_-,X)} \subseteq \Sigma^{K,L} $ and for any $ (A,B) \in \Sigma^{K,L} $, $ A+BK $ is Schur.
\end{definition}

To formulate necessary and sufficient condition for the notion of informativity in Definition \ref{DS}, we first state the following technical lemma.

\begin{lemma}\label{LMI}
	($ \!\! $\cite[Prop. 5]{van2022data}).
	Let $R=\begin{bmatrix}
	R_{11}&R_{12}\\R_{21}&R_{22}
	\end{bmatrix}\in\mathbb{S}^{2n}$ and define the mapping
	\begin{equation}\label{psi}
	\Psi(\lambda):=R_{11}+R_{22}+\lambda R_{12}+\lambda^{-1} R_{21}.
	\end{equation}
	Then, there exists a $ P\in\mathbb{S}^{n} $ such that
	\begin{equation}\label{PLMI}
	\begin{bmatrix}
	P&0\\0&-P
	\end{bmatrix}-R>0
	\end{equation}
	if and only if $ \Psi(1)<0 $ and 
	\begin{equation}\label{e}
	\begin{bmatrix}
	0 & \Psi(1)^{-1} \\ \Psi(-1) & 2(R_{12}-R_{21})\Psi(1)^{-1}
	\end{bmatrix}
	\end{equation}
	 has no eigenvalues on the imaginary axis. 
\end{lemma}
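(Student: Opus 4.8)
\emph{Plan.} The key is to read Lemma~\ref{LMI} as a discrete-time Kalman--Yakubovich--Popov (KYP)-type statement and to route the proof through an equivalent frequency-domain condition on the unit circle. Concretely, I would first establish the intermediate equivalence that there exists $P\in\mathbb{S}^{n}$ satisfying \eqref{PLMI} if and only if $\Psi(\lambda)<0$ for every $\lambda\in\mathbb{C}$ with $\abs{\lambda}=1$ (working over $\mathbb{C}$, with ${}^{*}$ the conjugate transpose). The necessity of this frequency condition is the easy direction: given \eqref{PLMI}, insert the structured vector $z=\begin{bmatrix}\lambda v\\ v\end{bmatrix}$ with $\abs{\lambda}=1$ and $v\neq0$. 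Since $\abs{\lambda}=1$, the $P$-dependent term $z^{*}\begin{bmatrix}P&0\\0&-P\end{bmatrix}z=(\abs{\lambda}^{2}-1)\,v^{*}Pv$ vanishes, while a direct expansion gives $z^{*}Rz=v^{*}\Psi(\lambda^{-1})v$; hence \eqref{PLMI} forces $v^{*}\Psi(\lambda^{-1})v<0$ for all $v\neq0$, i.e. $\Psi(\mu)<0$ on the whole circle as $\mu=\lambda^{-1}$ ranges over it.

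The sufficiency of the frequency condition is the main obstacle, being precisely the lossless part of the KYP lemma. I would prove it by convex duality. By the theorem of alternatives for strict LMIs, infeasibility of \eqref{PLMI} is equivalent to the existence of a nonzero $Z=\begin{bmatrix}Z_{11}&Z_{12}\\Z_{21}&Z_{22}\end{bmatrix}\geq0$ with $Z_{11}=Z_{22}$ (annihilation of the free variable $P$) and $\operatorname{tr}(ZR)\geq0$. The crux is a losslessness argument: the cone $\{Z\geq0:Z_{11}=Z_{22}\}$ is the conic hull of rank-one ``circle'' matrices $\begin{bmatrix}\lambda v\\ v\end{bmatrix}\begin{bmatrix}\lambda v\\ v\end{bmatrix}^{*}$ with $\abs{\lambda}=1$ (these indeed have equal diagonal blocks $vv^{*}$). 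Writing $Z=\sum_{k}\zeta_{k}\zeta_{k}^{*}$ with $\zeta_{k}=\begin{bmatrix}\lambda_{k}v_{k}\\ v_{k}\end{bmatrix}$ gives $\operatorname{tr}(ZR)=\sum_{k}v_{k}^{*}\Psi(\lambda_{k}^{-1})v_{k}<0$ whenever $\Psi<0$ on the circle, contradicting $\operatorname{tr}(ZR)\geq0$; hence \eqref{PLMI} is feasible. (Alternatively one simply invokes the discrete-time KYP lemma directly.)

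It then remains to translate ``$\Psi(\lambda)<0$ for all $\abs{\lambda}=1$'' into the two checkable conditions in the statement. Since the unit circle is connected and $\lambda\mapsto\Psi(\lambda)$ is continuous and Hermitian there, once $\Psi(1)<0$ is assumed the inequality holds on the whole circle if and only if $\Psi(\lambda)$ never becomes singular, i.e. $\det\Psi(\lambda)\neq0$ for all $\abs{\lambda}=1$ (an eigenvalue cannot cross zero without the determinant vanishing). I would detect this nonsingularity through the Cayley transform $\lambda=\tfrac{s-1}{s+1}$, which maps the imaginary axis bijectively onto the unit circle. Clearing denominators yields the identity $(s^{2}-1)\Psi(\lambda)=s^{2}\Psi(1)-2s(R_{12}-R_{21})-\Psi(-1)$, so the unit-circle singularities of $\Psi$ correspond to the purely imaginary roots of $\det\bigl(s^{2}\Psi(1)-2s(R_{12}-R_{21})-\Psi(-1)\bigr)=0$. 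A direct eigenvector computation shows these are exactly the eigenvalues of the matrix \eqref{e}: for an eigenpair $\begin{bmatrix}x\\ y\end{bmatrix}$ with eigenvalue $s$, the first block row gives $y=s\Psi(1)x$, and substituting into the second yields (after multiplying by $-1$) $\bigl(s^{2}\Psi(1)-2s(R_{12}-R_{21})-\Psi(-1)\bigr)x=0$. Therefore \eqref{e} has no imaginary-axis eigenvalue if and only if $\det\Psi(\lambda)\neq0$ on the circle, closing the chain of equivalences.

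The only subtlety is the two boundary images $\lambda=1\leftrightarrow s=\infty$ and $\lambda=-1\leftrightarrow s=0$. The former is automatic because $\Psi(1)<0$ is invertible (this is also what makes \eqref{e} well defined and forces the pencil to have exactly $2n$ finite eigenvalues); the latter is captured correctly since $s=0$ lies on the imaginary axis and is an eigenvalue of \eqref{e} exactly when $\Psi(-1)$ is singular. The genuine difficulty of the argument is concentrated in the KYP sufficiency step of the second paragraph; the remaining reductions are continuity and elementary linear-algebra bookkeeping.
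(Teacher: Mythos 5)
The paper itself contains no proof of Lemma~\ref{LMI}: it is imported wholesale from \cite[Prop.~5]{van2022data}, so there is no in-paper argument to compare yours against, and I assess your proposal on its own merits. Its architecture is sound and the computations that carry it are correct. The substitution $z=\begin{bmatrix}\lambda v\\ v\end{bmatrix}$ with $\abs{\lambda}=1$ indeed annihilates the $P$-dependent term and gives $z^{*}Rz=v^{*}\Psi(\lambda^{-1})v$, so \eqref{PLMI} forces $\Psi<0$ on the unit circle. The Cayley identity $(s^{2}-1)\,\Psi\bigl(\tfrac{s-1}{s+1}\bigr)=s^{2}\Psi(1)-2s(R_{12}-R_{21})-\Psi(-1)$ checks out (using $\Psi(\pm1)=R_{11}+R_{22}\pm R_{12}\pm R_{21}$), and since $s^{2}-1\neq 0$ for purely imaginary $s$, imaginary-axis roots of the quadratic pencil correspond exactly to unit-circle singularities of $\Psi$ away from $\lambda=1$, that point being handled by $\Psi(1)<0$; your eigenvector bookkeeping showing that \eqref{e} is a companion linearization of the pencil (including that $x\neq 0$ for any eigenvector) is also right, as is the continuity/connectedness argument reducing negativity on the circle to $\Psi(1)<0$ plus nonsingularity.

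The one place where real mathematical weight rests on an unproven assertion is the losslessness step: that every $Z\geq 0$ with $Z_{11}=Z_{22}$ is a conic combination of rank-one matrices $\zeta\zeta^{*}$ with $\zeta=\begin{bmatrix}\lambda v\\ v\end{bmatrix}$, $\abs{\lambda}=1$. This claim carries the entire sufficiency direction (it is exactly the lossless S-procedure behind the discrete-time KYP lemma), and as written it is asserted, not proved. It is true, and can be discharged in a few lines, which you should include: factor $Z=\begin{bmatrix}X\\ Y\end{bmatrix}\begin{bmatrix}X\\ Y\end{bmatrix}^{*}$; then $XX^{*}=YY^{*}$ implies $X=YW$ for some unitary $W$; diagonalizing $W=\sum_{k}\lambda_{k}u_{k}u_{k}^{*}$ with $\abs{\lambda_{k}}=1$ and setting $v_{k}=Yu_{k}$ yields $Z=\sum_{k}\zeta_{k}\zeta_{k}^{*}$ of the required form, and $Z\neq 0$ guarantees some $v_{k}\neq 0$, making $\operatorname{tr}(ZR)$ strictly negative and completing the contradiction. (Your parenthetical alternative of invoking the discrete-time KYP lemma directly is also legitimate and non-circular: the equivalence you need is the KYP lemma for the controllable pair $(A,B)=(0,I)$ with $M=R$.) Two small bookkeeping points to make explicit: the theorem of alternatives for the strict LMI is applied over the reals (where it holds with no constraint qualification), so $Z$ is real while the $\zeta_{k}$ are complex --- harmless, since $\operatorname{tr}(ZR)=\sum_{k}\zeta_{k}^{*}R\zeta_{k}$ either way; and if instead you run everything over $\mathbb{C}$, a Hermitian $P$ must be replaced by its real part at the end, which still satisfies \eqref{PLMI} for real symmetric $R$.
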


The following theorem uses Lemma \ref{LZ} and Lemma \ref{LMI} to give a necessary and sufficient condition for stability of all systems in $ \Sigma^{K,L} $.

\begin{theorem}\label{TI}
	Define \begin{equation}\label{R}
	R=\begin{bmatrix}
	R_{11}&R_{12}\\R_{21}&R_{22}
	\end{bmatrix}:=\begin{bmatrix}
	D^A-A_{\rm m}\Gamma^AA_{\rm m}^{\top} & A_{\rm m}\Gamma^A \\ \Gamma^AA_{\rm m}^{\top} & -\Gamma^A
	\end{bmatrix}.
	\end{equation}
	Given $ \Psi(\lambda) $ in \eqref{psi}, we assume that $ \Psi(1) $ is invertible and \eqref{e} has no eigenvalues on the imaginary axis. 
	Then, given $ K $ and $ L $ such that $ \Sigma^{K,L} \neq \emptyset $, $ A+BK $ is Schur $ \forall (A,B) \in \Sigma^{K,L} $ if and only if  
	\begin{equation}\label{TS}
	A_{\rm m}^{\top}-I\in\mathcal{Z}_n^+\left(\begin{bmatrix}
	-D^A & 0 \\ 0 & \Gamma^A
	\end{bmatrix}\right).
	\end{equation}
\end{theorem}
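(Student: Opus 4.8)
The plan is to collapse the two-variable set $\Sigma^{K,L}$ into a statement about the single matrix $F:=A+BK$, and then to recognise the resulting robust-stability question as precisely the setting of Lemma \ref{LZ} and Lemma \ref{LMI}.

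\emph{Reduction.} Since $\Sigma^{K,L}\neq\emptyset$, I would first show that the image $\{A+BK\mid(A,B)\in\Sigma^{K,L}\}$ equals $\mathcal{F}:=\{F\mid D^A\geq(F-A_{\rm m})\Gamma^A(F-A_{\rm m})^\top\}$: the inclusion ``$\subseteq$'' is immediate from the first defining inequality, while for ``$\supseteq$'' I fix some $(A_0,B_0)\in\Sigma^{K,L}$ and, given $F\in\mathcal{F}$, take $(F-B_0K,\,B_0)$, which keeps the same $B$ and hence still satisfies the $D^B$-inequality. Because a matrix and its transpose share their spectrum, ``$A+BK$ is Schur for all $(A,B)\in\Sigma^{K,L}$'' is then equivalent to ``every $Z\in\mathcal{Z}_n(R)$ is Schur'', once I check that $Z=F^\top\in\mathcal{Z}_n(R)$ iff $F\in\mathcal{F}$; this identity is a direct expansion of $\begin{bmatrix}I\\Z\end{bmatrix}^\top R\begin{bmatrix}I\\Z\end{bmatrix}$ using the blocks of $R$ in \eqref{R}. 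Along the way I would record that $R\in\mathbf{\Pi}_{n,n}$ (here $R_{22}=-\Gamma^A<0$, $\ker R_{22}=\{0\}$, and $R|R_{22}=D^A\geq0$) and that $\Psi(1)=\begin{bmatrix}I\\I\end{bmatrix}^\top R\begin{bmatrix}I\\I\end{bmatrix}=D^A-(A_{\rm m}-I)\Gamma^A(A_{\rm m}-I)^\top$, so that condition \eqref{TS} is exactly $\Psi(1)<0$.

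\emph{Sufficiency.} Assuming \eqref{TS}, i.e.\ $\Psi(1)<0$, together with the standing hypotheses that $\Psi(1)$ is invertible and \eqref{e} has no imaginary eigenvalues, Lemma \ref{LMI} produces a $P\in\mathbb{S}^n$ with $\begin{bmatrix}P&0\\0&-P\end{bmatrix}-R>0$. For any $Z\in\mathcal{Z}_n(R)$ I would form the congruence by $\begin{bmatrix}I\\Z\end{bmatrix}$ and use the defining QMI $\begin{bmatrix}I\\Z\end{bmatrix}^\top R\begin{bmatrix}I\\Z\end{bmatrix}\geq0$ to obtain $Z^\top PZ-P<0$. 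Taking in particular $Z=A_{\rm m}^\top\in\mathcal{Z}_n(R)$ gives $A_{\rm m}PA_{\rm m}^\top-P<0$, and since $A_{\rm m}$ is Schur this Lyapunov inequality forces $P>0$. With $P>0$ in hand, $Z^\top PZ-P<0$ certifies Schurness of every $Z\in\mathcal{Z}_n(R)$.

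\emph{Necessity.} I would use the contrapositive: if \eqref{TS} fails then $\Psi(1)\not<0$, so there is $x\neq0$ with $x^\top\Psi(1)x\geq0$, equivalently $\begin{bmatrix}x\\x\end{bmatrix}^\top R\begin{bmatrix}x\\x\end{bmatrix}\geq0$. Applying Lemma \ref{LZ} with $N=R\in\mathbf{\Pi}_{n,n}$ to the vector $\begin{bmatrix}x\\x\end{bmatrix}$ yields $Z\in\mathcal{Z}_n(R)$ with $\begin{bmatrix}x\\x\end{bmatrix}=\begin{bmatrix}I\\Z\end{bmatrix}x$, that is $Zx=x$. Hence $Z$ has eigenvalue $1$ and is not Schur, and the corresponding $F=Z^\top$ exhibits a system in $\Sigma^{K,L}$ with $A+BK$ not Schur. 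The main obstacle I anticipate lies in the sufficiency step: Lemma \ref{LMI} only returns a symmetric $P$, and an indefinite $P$ would merely rule out eigenvalues on the unit circle rather than guarantee Schurness. The crux is therefore the observation that evaluating the derived inequality at the specific admissible point $A_{\rm m}^\top\in\mathcal{Z}_n(R)$, combined with $A_{\rm m}$ being Schur, upgrades $P$ to a genuine positive-definite Lyapunov certificate for the whole set.
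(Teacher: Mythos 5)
Your proposal is correct and follows essentially the same route as the paper's proof: sufficiency via Lemma \ref{LMI} and a congruence with $\begin{bmatrix}I\\Z\end{bmatrix}$, upgraded to a positive-definite $P$ by evaluating at the admissible point corresponding to $A_{\rm m}$ and solving the Lyapunov equation, and necessity via Lemma \ref{LZ} producing a matrix with eigenvalue $1$. Your up-front reduction of $\Sigma^{K,L}$ to the single-matrix set $\mathcal{Z}_n(R)$ is merely a cleaner packaging of the same $(A_0,B_0)$ constructions the paper performs inline, so the two arguments coincide in substance.
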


\begin{proof}
	(\underline{Sufficiency}):
	Suppose (\ref{TS}) holds.
	Then, 
	\begin{equation}\label{PS1}
	\Psi(1)=\begin{bmatrix}
	I \\ A_{\rm m}^\top-I
	\end{bmatrix}^\top
	\begin{bmatrix}
	D^A & 0 \\ 0 & -\Gamma^A
	\end{bmatrix}
	\begin{bmatrix}
	I \\ A_{\rm m}^\top-I
	\end{bmatrix}<0.
	\end{equation}
	As (\ref{e}) has no eigenvalues on the imaginary axis, we have from Lemma \ref{LMI} that there exists a $ P\in\mathbb{S}^{n} $ such that	\eqref{PLMI} holds, implying that $ \forall (A,B) \in \Sigma^{K,L} $, 
	$$
	\begin{bmatrix}
	I \\ (A+BK)^\top
	\end{bmatrix}^\top
	\left(\begin{bmatrix}
	P&0\\0&-P
	\end{bmatrix}-R\right)
	\begin{bmatrix}
	I \\ (A+BK)^\top
	\end{bmatrix}>0.
	$$
	On the other hand, $ \forall (A,B) \in \Sigma^{K,L} $, we have
	\begin{align*}
	&\begin{bmatrix}
	I \\ (A+BK)^\top
	\end{bmatrix}^\top
	R
	\begin{bmatrix}
	I \\ (A+BK)^\top
	\end{bmatrix}\\
	=&D^A-(A+BK-A_{\rm m})\Gamma^A(A+BK-A_{\rm m})^\top\geq0.
	\end{align*}
	Then, for given $ P $ in (\ref{PLMI}), we have that $ \forall (A,B) \in \Sigma^{K,L} $, 
	\begin{equation}\label{PS}
	\begin{bmatrix}
	I \\ (A+BK)^\top
	\end{bmatrix}^\top
	\begin{bmatrix}
	P&0\\0&-P
	\end{bmatrix}
	\begin{bmatrix}
	I \\ (A+BK)^\top
	\end{bmatrix}>0.
	\end{equation}
	Next, we prove $ P>0 $.
	Choosing any $ B_0\in \mathbb{R}^{n\times m} $ such that
	$
	D^B-(B_0L-B_{\rm m})\Gamma^B(B_0L-B_{\rm m})^\top \geq 0,
	$
	and any $ A_0 $ such that $ A_0+B_0K=A_{\rm m} $, we have $ (A_0,B_0) \in \Sigma^{K,L} $.
	Then,
	\begin{equation}\label{P0}
	\begin{aligned}
	&\begin{bmatrix}
	I \\ (A_0+B_0K)^\top
	\end{bmatrix}^\top
	\begin{bmatrix}
	P&0\\0&-P
	\end{bmatrix}
	\begin{bmatrix}
	I \\ (A_0+B_0K)^\top
	\end{bmatrix}\\
	=&\begin{bmatrix}
	I \\ A_m^\top
	\end{bmatrix}^\top
	\begin{bmatrix}
	P&0\\0&-P
	\end{bmatrix}
	\begin{bmatrix}
	I \\ A_m^\top
	\end{bmatrix}>0,
	\end{aligned}
	\end{equation}
	that is, $ P $ satisfies the Lyapunov equation $ P-A_{\rm m}PA_{\rm m}^\top = Y $ for some matrix $ Y > 0 $.
	Since $ A_{\rm m} $ is Schur, the solution $ P $ is unique and given by $ P = \sum_{k=0}^{\infty}A_{\rm m}^kY(A_{\rm m}^\top)^k $.
	We observe that $ P > 0 $ because $ Y > 0 $. 
	Then, there exists a $ P > 0 $, such that $ \forall (A,B) \in \Sigma^{K,L} $, (\ref{PS}) holds, implying that $ A+BK $ is Schur $ \forall (A,B) \in \Sigma^{K,L} $.
	
	(\underline{Necessity}):
	Assume that $ A+BK $ is Schur $ \forall (A,B) \in \Sigma^{K,L} $.
	We want to prove that (\ref{TS}) holds.
	Suppose by contradiction that (\ref{TS}) does not hold, that is, 
	$$ \begin{bmatrix}
	I \\ A_{\rm m}^\top-I
	\end{bmatrix}^{\top}
	\begin{bmatrix}
	-D^A & 0 \\ 0 & \Gamma^A
	\end{bmatrix}
	\begin{bmatrix}
	I \\ A_{\rm m}^\top-I
	\end{bmatrix} $$
	has a non-positive eigenvalue.
	Then, there exists a nonzero vector $ x \in \mathbb{R}^n $ such that
	$$
	-\begin{bmatrix}
	x \\ x
	\end{bmatrix}^{\top}
	\begin{bmatrix}
	I & 0\\ A_{\rm m}^\top & -I
	\end{bmatrix}^{\top}
	\begin{bmatrix}
	-D^A & 0 \\ 0 & \Gamma^A
	\end{bmatrix}
	\begin{bmatrix}
	I & 0\\ A_{\rm m}^\top & -I
	\end{bmatrix}
	\begin{bmatrix}
	x \\ x
	\end{bmatrix}\geq 0.
	$$
	It can be verified that 
	\begin{align*}
	-&\begin{bmatrix}
	I & 0\\ A_m^\top & -I
	\end{bmatrix}^{\top}
	\begin{bmatrix}
	-D^A & 0 \\ 0 & \Gamma^A
	\end{bmatrix}
	\begin{bmatrix}
	I & 0\\ A_m^\top & -I
	\end{bmatrix}\\
	=&\begin{bmatrix}
	D^A-A_m\Gamma^AA_m^{\top} & A_m\Gamma^A \\ \Gamma^AA_m^{\top} & -\Gamma^A
	\end{bmatrix}\in \mathbf{\Pi}_{n,n}.
	\end{align*}
	Taking any $ B_0\in \mathbb{R}^{n\times m} $ such that
	\begin{equation*}
	D^B-(B_0L-B_{\rm m})\Gamma^B(B_0L-B_{\rm m})^\top \geq 0,
	\end{equation*}
	we obtain from Lemma \ref{LZ} that there exists $ A_0\in \mathbb{R}^{n\times n}: $
	\begin{equation}\label{P1}
	(A_0+B_0K)^{\top} \in \mathcal{Z}_{r}\left(\begin{bmatrix}
	D^A-A_{\rm m}\Gamma^AA_{\rm m}^{\top} & A_{\rm m}\Gamma^A \\ \Gamma^AA_{\rm m}^{\top} & -\Gamma^A
	\end{bmatrix}\right),
	\end{equation}
	such that
	\begin{equation}\label{P2}
	\begin{bmatrix}
	x\\x
	\end{bmatrix}= \begin{bmatrix}
	I\\ (A_0+B_0K)^\top
	\end{bmatrix}x.
	\end{equation}
	Obviously, (\ref{P1}) is equivalent to 
	\begin{equation}\label{key}
	D^A-(A_0+B_0K-A_{\rm m})\Gamma^A(A_0+B_0K-A_{\rm m})^\top \geq 0,
	\end{equation}
	that is, $ (A_0,B_0) \in \Sigma^{K,L} $.
	However, (\ref{P2}) implies that $ A_0+B_0K $ has an eigenvalue $ 1 $ and is thus not Schur, resulting in a contradiction.
	We thus conclude that (\ref{TS}) holds if $ A+BK $ is Schur $ \forall (A,B) \in \Sigma^{K,L} $.
\end{proof}

It is noteworthy that \eqref{TS} is independent of the particular gains $ K $ and $ L $.
This is instrumental in deriving the following necessary and sufficient conditions for data-driven approximate MRC with guaranteed stability.
\begin{theorem}\label{TDIAMRCSG}
	Let $ \Phi \in \mathbf{\Pi}_{n,T} $.
	Assume that $ (i)\ N \not\leq 0 $ and \eqref{e} has no eigenvalues on the imaginary axis; or $ (ii)\ D^A=D^B=0 $.
	Then, the data $ (U_-,X) $, generated by system (\ref{1}) with noise model (\ref{noise}), are informative for approximate MRC with guaranteed stability if and only if there exist $ K,L $ and $ \alpha_1,\alpha_2>0 $ such that \eqref{TK}, \eqref{TL} and \eqref{TS} hold.
\end{theorem}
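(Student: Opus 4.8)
The plan is to read Definition~\ref{DS} as imposing two separate demands on the gains $K,L$: first, $\Sigma_{(U_-,X)}\subseteq\Sigma^{K,L}$ (informativity for approximate MRC), and second, that every $(A,B)\in\Sigma^{K,L}$ gives a Schur matrix $A+BK$ (stability over the whole approximate-matching set). These two demands have already been characterized in isolation: the first by \eqref{TK}--\eqref{TL} in Theorem~\ref{TDIAMRC}, the second by \eqref{TS} in Theorem~\ref{TI}. What lets me glue them onto one and the same pair $(K,L)$ is the fact, stressed just before the statement, that \eqref{TS} contains no $K$ or $L$; so the two characterizations do not interfere. I would also record, as in the computation leading to \eqref{PS1}, that \eqref{TS} is exactly the condition $\Psi(1)<0$, so that \eqref{TS} makes $\Psi(1)$ invertible and thereby supplies one of the standing hypotheses of Theorem~\ref{TI}.

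For sufficiency, suppose $K,L$ and $\alpha_1,\alpha_2>0$ satisfy \eqref{TK}, \eqref{TL} and \eqref{TS}. The sufficiency direction of Theorem~\ref{TDIAMRC} applied to \eqref{TK}--\eqref{TL} yields $\Sigma_{(U_-,X)}\subseteq\Sigma^{K,L}$; in particular $\Sigma^{K,L}\neq\emptyset$ since it contains $(A_{\rm s},B_{\rm s})$. It remains to get stability of all $(A,B)\in\Sigma^{K,L}$. Under $(i)$, \eqref{TS} is $\Psi(1)<0$ (cf.\ \eqref{PS1}), hence $\Psi(1)$ is invertible; combined with the hypothesis that \eqref{e} has no eigenvalues on the imaginary axis, I apply the sufficiency part of Theorem~\ref{TI} to conclude $A+BK$ is Schur for all $(A,B)\in\Sigma^{K,L}$. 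Under $(ii)$ I argue directly: with $D^A=0$ and $\Gamma^A>0$, membership in $\Sigma^{K,L}$ forces $(A+BK-A_{\rm m})\Gamma^A(A+BK-A_{\rm m})^\top\le0$, hence $A+BK=A_{\rm m}$, which is Schur; stability is then automatic and neither Theorem~\ref{TI} nor the condition \eqref{e} is needed. In both cases $(K,L)$ witnesses informativity for approximate MRC with guaranteed stability.

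For necessity, suppose the data are informative for approximate MRC with guaranteed stability, witnessed by $K,L$: then $\Sigma_{(U_-,X)}\subseteq\Sigma^{K,L}$ and $A+BK$ is Schur for all $(A,B)\in\Sigma^{K,L}$. The inclusion alone makes the data informative for approximate MRC, so the necessity direction of Theorem~\ref{TDIAMRC}, valid under $(i)$ or $(ii)$, produces $\alpha_1,\alpha_2>0$ for which these same $K,L$ satisfy \eqref{TK} and \eqref{TL}. For \eqref{TS} I use that $\Sigma^{K,L}\neq\emptyset$ (it contains the true system). Under $(i)$ I invoke the necessity direction of Theorem~\ref{TI}; that direction is proved purely by the contradiction argument built on Lemma~\ref{LZ} and does not rely on the invertibility of $\Psi(1)$ or on \eqref{e}, so stability of all $(A,B)\in\Sigma^{K,L}$ delivers \eqref{TS} at once. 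Under $(ii)$, \eqref{TS} with $D^A=0$ reduces to $(A_{\rm m}-I)\Gamma^A(A_{\rm m}-I)^\top>0$, which holds because $A_{\rm m}$ is Schur, so that $1$ is not an eigenvalue of $A_{\rm m}$ and $A_{\rm m}-I$ is invertible. Thus \eqref{TK}, \eqref{TL} and \eqref{TS} all hold.

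The delicate part is the bookkeeping of hypotheses across the two cases, not any single computation. In case $(i)$ one must notice that the $\Psi(1)$-invertibility required to call Theorem~\ref{TI} is not an added assumption but is exactly \eqref{TS}. In case $(ii)$ the general stability route through Lemma~\ref{LMI} and \eqref{e} is unavailable, since \eqref{e} is not assumed there; the way out is the structural collapse $D^A=0\Rightarrow A+BK=A_{\rm m}$, which simultaneously makes stability automatic and turns \eqref{TS} into a free consequence of $A_{\rm m}$ being Schur. The entire decoupling ultimately rests on \eqref{TS} being independent of $K,L$, so that the single pair $(K,L)$ furnished by Theorem~\ref{TDIAMRC} can be certified for stability without any re-optimization.
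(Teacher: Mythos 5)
Your proof is correct and follows essentially the same route as the paper: in case $(i)$ it combines Theorem~\ref{TDIAMRC} with Theorem~\ref{TI} (whose hypotheses you verify, noting that \eqref{TS} yields $\Psi(1)<0$ and that $\Sigma^{K,L}\ni(A_{\rm s},B_{\rm s})$ is nonempty), and in case $(ii)$ it uses the structural collapse $D^A=0\Rightarrow A+BK=A_{\rm m}$ so that stability is automatic and \eqref{TS} follows from $A_{\rm m}$ being Schur. Your version merely makes explicit the hypothesis bookkeeping that the paper's terser proof leaves implicit.
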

\begin{proof}
	In case $ (i) $, the assertion of Theorem \ref{TDIAMRCSG} is obtained by directly combining Theorems \ref{TDIAMRC} and \ref{TI},  since $ (i) $ meets the assumptions of both Theorem \ref{TDIAMRC} and \ref{TI}.
	In case $ (ii) $, we first obtain from \eqref{PMK} that $ \forall (A,B) \in \Sigma^{K,L} $,
	\begin{equation*}
	x(t+1)=Ax(t)+B(Kx(t)+Lr(t))=A_{\rm m}x(t)+B_{\rm m}r(t),
	\end{equation*}
	which is a stable closed-loop system as $ A_{\rm m} $ is Schur.
	On the other hand, \eqref{TS} always holds when $ A_{\rm m} $ is Schur and $ D^A=0 $.
	Hence, in case $ (ii) $, the assertion of Theorem \ref{TDIAMRCSG} is derived directly from Theorem \ref{TDIAMRC}.
\end{proof}
\begin{remark} \label{RB}
	Theorem \ref{TDIAMRCSG} is a general result that also covers the noiseless scenario by setting $ D^A=D^B=0 $, $ \Gamma^A=I_n$, $ \Gamma^B=I_p $ and $ \Phi=\begin{bmatrix}
	0&0\\0&-I
	\end{bmatrix} \in \mathbf{\Pi}_{n,T} $.
	With these special choices, $ A_{\rm m} $ being Schur implies that \eqref{TS} holds, and it can be shown that \eqref{TK} and \eqref{TL} are equivalent to \eqref{T1K} and \eqref{T1L}, that is, informativity for approximate MRC with guaranteed stability is equivalent to informativity for MRC, and the conditions of Theorem \ref{TDIAMRCSG} boil down to those of Corollary \ref{CDI}. 
\end{remark}

\section{Numerical Example}

This section illustrates the theoretical results, especially Theorem \ref{TDIAMRCSG}, via a highly maneuverable aircraft model with unstable longitudinal dynamics (angle of attack, pitch rate, and pitch angle) \cite{hartmann1979control,yuan2018robust}. After discretization with sampling time of 0.01, we get a system model in the form \eqref{1}, with
\begin{align*}
A_{\rm s} &=
\begin{bmatrix}
0.9810 & 0.0098 & 0 \\
0.1172 & 0.9737 & 0 \\
0 & 0.01 & 1\\
\end{bmatrix},\\
B_{\rm s} & =
\begin{bmatrix}
-0.0024 & -0.0017 & 0 & -0.0020 \\
-0.4621 & -0.3160 & 0.2240 & -0.3118 \\
0 & 0 & 0 & 0 \\
\end{bmatrix},
\end{align*}
and a reference model in the form \eqref{r}, with
\begin{equation*}
A_{\rm m} = \begin{bmatrix}
0.9800 & 0.0065 & -0.0075 \\
-0.0767 & 0.2964 & -1.5178 \\
0 & 0.01 & 1 \\
\end{bmatrix}, B_{\rm m}=B_{\rm s}.
\end{equation*}

Because the system is open-loop unstable, we consider closed-loop experiments of length $ T = 100 $, with an internally stabilizing controller $ u=K_0x+L_0r $,
\begin{align*}
	K_0&=\begin{bmatrix}
	0.7477 & -0.0511 & 0.4806 \\
	0.7160 & 0.3976 & -0.0423 \\
	0.2418 & -0.1610 & -0.8422 \\
	0.3790 & -0.5398 & -0.4469 \\
	\end{bmatrix},\\
	L_0&=\begin{bmatrix}
	0.7109 & -0.2894 & 0.6691 & 0.1629 \\
	-0.5317 & 0.6292 & 0.5491 & 0.5267 \\
	0.5001 & 0.7104 & -0.1839 & 0.4843 \\
	0.2632 & 0.5693 & 0.1066 & -0.7163 \\
	\end{bmatrix}
\end{align*}
with $ r $ drawn from the standard normal distribution, and random initial conditions.
To generate noisy data, the noise follows the considerations in \cite{hartmann1979control,yuan2018robust}, namely, the noise affecting the second state equation is roughly two orders of magnitude larger than the noise affecting the first state equation, while the third state equation is noiseless because it represents integration of the pitch rate.
Given a noise level $ w_l $ (a general ratio of noise to state amplitude),
we take the noise model in the form of \eqref{noise}, with
\begin{equation*}
\Phi_{11}=\begin{bmatrix}
0.001w_l^2 & 0 & 0 \\
0 & 10w_l^2 & 0 \\
0 & 0 & 0 \\
\end{bmatrix}, \Phi_{12}= \Phi_{21}^\top=0, \Phi_{22}=-I.
\end{equation*}
In particular, we choose 31 different noise levels for $ w_l $ and generate 200 noisy datasets for each noise level. 
The objective is to find the controller \eqref{controller} achieving data-driven approximate MRC with guaranteed stability.
Instead of pre-defining the difference matrices $ D^A $ and $ D^B $, we take $ \Gamma^A=I, \Gamma^B=I $ and minimize tr$ (D^A) +\text{tr}(D^B) $ under the LMI constraints \eqref{TK}, \eqref{TL} and \eqref{TS} as discussed in Remark 1.

Let us first consider $ 21 $ noise levels $ w_l\in\{0,0.05,\dots,1\} $.
For each level, we report in Fig. \ref{DADB} the average and 90\% confidence intervals of the traces of the resulting $ D^A $ and $ D^B $.
We also verify that the eigenvalue condition of \eqref{e} is always satisfied with the obtained $ D^A $.
\begin{figure}[htbp]
	\centering
	\includegraphics[trim=49bp 339bp 61bp 350bp, clip, width=8cm]{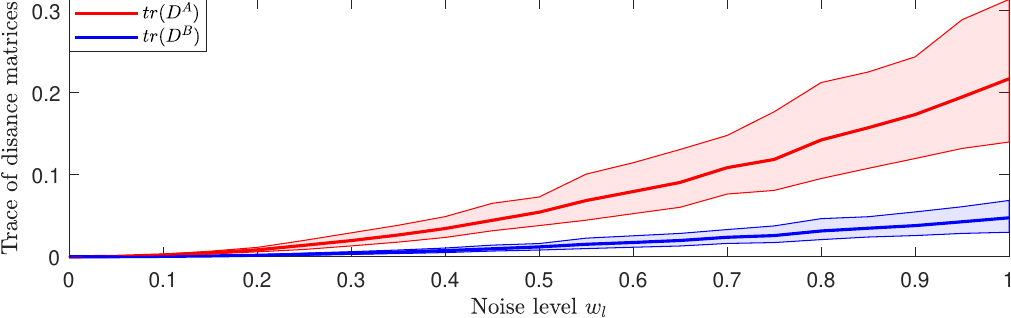}
	\caption{Traces of $ D^A $ and $ D^B $ as the noise level increases.}
	\label{DADB}
\end{figure}\\
As the noise level $ w_l $ increases beyond 1, the probability of obtaining an approximate MRC controller with guaranteed stability continues to decrease, as illustrated in the table below.
Once the noise level surpasses $ w_l=2 $, the control objective can hardly be achieved.
\begin{table}[htbp]
	\centering
	\label{tab:simple_table}
	\begin{tabular}{|c|c|c|c|c|}
		\hline
		$ w_l=1.1 $ & $ w_l=1.2 $ & $ w_l=1.3 $ & $ w_l=1.4 $ & $ w_l=1.5 $ \\
		\hline
		99\% & 98\% & 87.5\% & 71\% & 54\% \\
		\hline
		$ w_l=1.6 $ & $ w_l=1.7 $ & $ w_l=1.8 $ & $ w_l=1.9 $ & $ w_l=2.0 $ \\
		\hline
		35.5\% & 15.5\% & 11\% & 4\% & 1.5\%\\
		\hline
	\end{tabular}
\end{table}\\
Finally, for three datasets in noise levels $ w_l\in\{0,0.1,1\} $, we provide in Fig. \ref{error} the trajectories of $e=x-x_{{\rm m}}$ resulting from the obtained control gains $ K $ and $ L $,
from which stability and good regulation can be observed.

\begin{figure}[htbp]
	\centering
	\includegraphics[trim=10bp 204bp 10bp 225bp, clip, width=8cm]{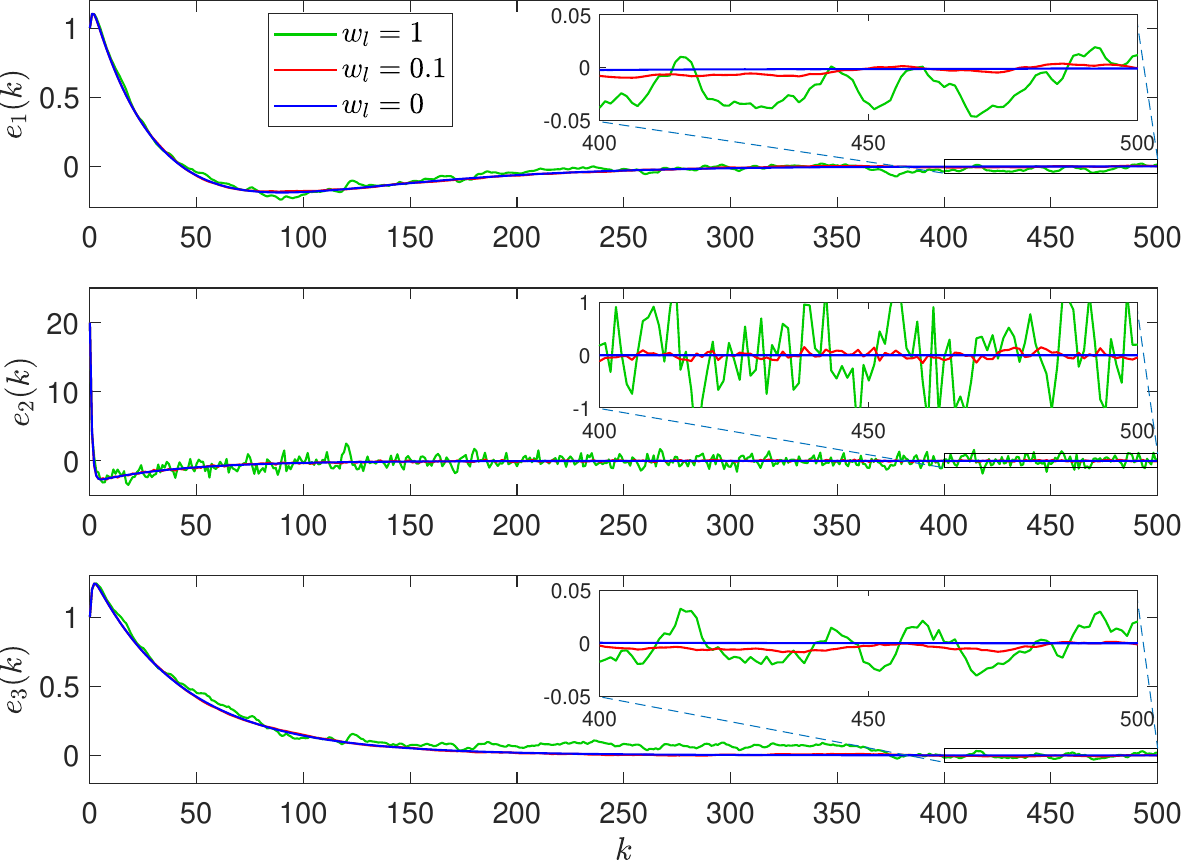}
	\caption{State tracking error with the noise levels $ 0,0.1,1 $.}
	\label{error}
\end{figure}

\section{Conclusions}
In this paper, the data-driven MRC problem has been studied within the framework of data informativity, and necessary and sufficient conditions for data have been developed to achieve MRC in noise-free and noisy settings.
In the noise-free setting, we have determined feedback gains so that all closed-loop systems consistent with the data match the reference model exactly.
In the noisy data setting, we have defined ellipsoids of systems centered around the reference model, and determined feedback gains for data-driven approximate MRC so that all closed-loop systems consistent with the data are within these ellipsoids.
In our pursuit of providing stability guarantees, we have found an interesting condition for ensuring stability across all closed-loop systems, which surprisingly remains independent of the specific feedback gains.
This was key in the derivation of necessary and sufficient conditions for approximate MRC with guaranteed stability.
Possible future research could involve exploring the scenario where input and output data are available.

\bibliographystyle{IEEEtran}
\bibliography{References}

\begin{thebibliography}{10}
\providecommand{\url}[1]{#1}
\csname url@samestyle\endcsname
\providecommand{\newblock}{\relax}
\providecommand{\bibinfo}[2]{#2}
\providecommand{\BIBentrySTDinterwordspacing}{\spaceskip=0pt\relax}
\providecommand{\BIBentryALTinterwordstretchfactor}{4}
\providecommand{\BIBentryALTinterwordspacing}{\spaceskip=\fontdimen2\font plus
\BIBentryALTinterwordstretchfactor\fontdimen3\font minus
  \fontdimen4\font\relax}
\providecommand{\BIBforeignlanguage}[2]{{%
\expandafter\ifx\csname l@#1\endcsname\relax
\typeout{** WARNING: IEEEtran.bst: No hyphenation pattern has been}%
\typeout{** loaded for the language `#1'. Using the pattern for}%
\typeout{** the default language instead.}%
\else
\language=\csname l@#1\endcsname
\fi
#2}}
\providecommand{\BIBdecl}{\relax}
\BIBdecl

\bibitem{ioannou2006adaptive}
P.~Ioannou and B.~Fidan, \emph{Adaptive control tutorial}.\hskip 1em plus 0.5em
  minus 0.4em\relax SIAM, 2006.

\bibitem{lavretsky2009combined}
E.~Lavretsky, ``Combined/composite model reference adaptive control,''
  \emph{IEEE Transactions on Automatic Control}, vol.~54, no.~11, pp.
  2692--2697, 2009.

\bibitem{chowdhary2010concurrent}
G.~Chowdhary and E.~Johnson, ``Concurrent learning for convergence in adaptive
  control without persistency of excitation,'' in \emph{49th IEEE Conference on
  Decision and Control (CDC)}, 2010, pp. 3674--3679.

\bibitem{roy2017combined}
S.~B. Roy, S.~Bhasin, and I.~N. Kar, ``Combined {MRAC} for unknown {MIMO LTI}
  systems with parameter convergence,'' \emph{IEEE Transactions on Automatic
  Control}, vol.~63, no.~1, pp. 283--290, 2017.

\bibitem{lee2019concurrent}
H.-I. Lee, H.-S. Shin, and A.~Tsourdos, ``Concurrent learning adaptive control
  with directional forgetting,'' \emph{IEEE Transactions on Automatic Control},
  vol.~64, no.~12, pp. 5164--5170, 2019.

\bibitem{hou2013model}
Z.-S. Hou and Z.~Wang, ``From model-based control to data-driven control:
  {Survey}, classification and perspective,'' \emph{Information Sciences}, vol.
  235, pp. 3--35, 2013.

\bibitem{willems2005note}
J.~C. Willems, P.~Rapisarda, I.~Markovsky, and B.~L. De~Moor, ``A note on
  persistency of excitation,'' \emph{Systems \& Control Letters}, vol.~54,
  no.~4, pp. 325--329, 2005.

\bibitem{breschi2021direct}
V.~Breschi, C.~De~Persis, S.~Formentin, and P.~Tesi, ``Direct data-driven
  model-reference control with {Lyapunov} stability guarantees,'' in \emph{2021
  60th IEEE Conference on Decision and Control (CDC)}, 2021, pp. 1456--1461.

\bibitem{de2019formulas}
C.~De~Persis and P.~Tesi, ``Formulas for data-driven control: {Stabilization},
  optimality, and robustness,'' \emph{IEEE Transactions on Automatic Control},
  vol.~65, no.~3, pp. 909--924, 2019.

\bibitem{pan2022stochastic}
G.~Pan, R.~Ou, and T.~Faulwasser, ``On a stochastic fundamental lemma and its
  use for data-driven optimal control,'' \emph{IEEE Transactions on Automatic
  Control}, vol.~68, no.~10, pp. 5922--5937, 2023.

\bibitem{schmitz2022willems}
P.~Schmitz, T.~Faulwasser, and K.~Worthmann, ``Willems' fundamental lemma for
  linear descriptor systems and its use for data-driven output-feedback
  {MPC},'' \emph{IEEE Control Systems Letters}, vol.~6, pp. 2443--2448, 2022.

\bibitem{van2020data}
H.~J. van Waarde, J.~Eising, H.~L. Trentelman, and M.~K. Camlibel, ``Data
  informativity: a new perspective on data-driven analysis and control,''
  \emph{IEEE Transactions on Automatic Control}, vol.~65, no.~11, pp.
  4753--4768, 2020.

\bibitem{van2020noisy}
H.~J. van Waarde, M.~K. Camlibel, and M.~Mesbahi, ``From noisy data to feedback
  controllers: {Nonconservative} design via a matrix {S-lemma},'' \emph{IEEE
  Transactions on Automatic Control}, vol.~67, no.~1, pp. 162--175, 2020.

\bibitem{polik2007survey}
I.~P{\'o}lik and T.~Terlaky, ``A survey of the {S-lemma},'' \emph{SIAM Review},
  vol.~49, no.~3, pp. 371--418, 2007.

\bibitem{van2023quadratic}
H.~J. van Waarde, M.~K. Camlibel, J.~Eising, and H.~L. Trentelman, ``Quadratic
  matrix inequalities with applications to data-based control,'' \emph{SIAM
  Journal on Control and Optimization}, vol.~61, no.~4, pp. 2251--2281, 2023.

\bibitem{van2023informativity}
H.~J. Van~Waarde, J.~Eising, M.~K. Camlibel, and H.~L. Trentelman, ``The
  informativity approach: To data-driven analysis and control,'' \emph{IEEE
  Control Systems Magazine}, vol.~43, no.~6, pp. 32--66, 2023.

\bibitem{van2022data}
H.~J. van Waarde, M.~K. Camlibel, and H.~L. Trentelman, ``Data-driven analysis
  and design beyond common {Lyapunov} functions,'' in \emph{2022 IEEE 61st
  Conference on Decision and Control (CDC)}, 2022, pp. 2783--2788.

\bibitem{hartmann1979control}
G.~Hartmann, M.~Barrett, and C.~Greene, ``Control design for an unstable
  vehicle,'' \emph{NASA CR-170393}, 1979.

\bibitem{yuan2018robust}
S.~Yuan, B.~De~Schutter, and S.~Baldi, ``Robust adaptive tracking control of
  uncertain slowly switched linear systems,'' \emph{Nonlinear Analysis: Hybrid
  Systems}, vol.~27, pp. 1--12, 2018.

\end{thebibliography}

\end{document}